\newtheorem{theorem}{Theorem}[section]
\newtheorem{proposition}[theorem]{Proposition}
\newtheorem{lemma}[theorem]{Lemma}
\newtheorem{corollary}[theorem]{Corollary}
\theoremstyle{definition}
\newtheorem{definition}[theorem]{Definition}
\newtheorem{example}[theorem]{Example}
\theoremstyle{remark}
\newtheorem{remark}[theorem]{Remark}
\numberwithin{equation}{section}
\begin{document}

 
\title[ The homogeneous spectrum of a $\Bbb Z_2$-graded comm. ring ]{The homogeneous spectrum of a $\Bbb Z_2$-graded commutative ring } 
 
\author[M. Aqalmoun]{Mohamed Aqalmoun}
\address[Mohamed Aqalmoun]{Department of Mathematics, Higher Normal School, Sidi Mohamed Ben Abdellah University, Fez, Morocco.}
\email{ maqalmoun@yahoo.fr}


%
 
 \maketitle
%

\begin{abstract}
Let $\Bbb Z_2:=\Bbb Z/2\Bbb Z$ be the additive group with two elements. In this article, we focus only on  $\Bbb Z_2$-graded commutative ring i.e commutative ring $R$ such that $R=R_0\oplus R_1$ as Abelian group and  $R_iR_j\subseteq R_{i+j}$ for all $i,j\in \Bbb Z_2$. Our main goals is to establish a strong relation between  $\Bbb Z_2$-graded prime ( maximal ) ideals of $R$ and prime ( maximal) ideals of $R_0$, for instance, it is showed that, the $\Bbb Z_2$-graded spectrum of $R$ is homeomorphic to the spectrum of $R_0$ with respect  to the Zariski topologies.     
\\
\textbf{Keywords:}  $\Bbb Z_2$-graded ring, $\Bbb Z_2$-graded prime ideal, $\Bbb Z_2$-graded maximal ideal, $\Bbb Z_2$-graded field.  \\
\textbf{MSC(2010):}  Primary: 13A02 ; Secondary: 13A99.
\end{abstract}
 
\section{\bf Introduction}
Let $G$ be an Abelian group with identity $e$ and $R$ be a commutative ring with unit. Then $R$ is called a $G$-graded ring if  there exist additive subgroups $R_g$ of $R$ indexed by elements $g\in G$ such that $R=\bigoplus_{g\in G}R_g$ and $R_gR_{g'}\subseteq R_{gg'}$ for all $g,g'\in G$, where $R_gR_{g'}$ consists of the finite sums of ring products $ab$ with $a\in R_g$ and $b\in R_{g'}$. The elements of $R_g$ are called homogeneous elements of $R$ of degree $g$. The homogeneous elements of the ring $R$ are denoted by $h(R)$, i.e. $h(R)=\cup_{g\in G}R_g$. If $a\in R$, then the element $a$ can be written uniquely as $\sum_{g\in G}a_g$, where $a_g\in R_g$ is called the $g$-component of $a$ in $R_g$.\par 
Let $R$ be a $G$-graded ring and $I$ be an ideal of $R$. Then $I$ is called a graded ideal of $R$ if $I=\bigoplus_{g\in G}(I\cap R_g)$. If $I$ is a $G$-graded ideal of $R$, then the quotient ring $R/I$ is a $G$-graded ring. Indeed $R/I=\bigoplus_{g\in G}(R/I)_g$ where $(R/I)_g=(R_g+I)/I=\{x+I\ / x\in R_g\}$. Let $S\subseteq h(R)$ be a multiplicatively closed subset of $R$. Then the ring of fractions $S^{-1}R$ is a $G$-graded ring. Indeed $S^{-1}R=\bigoplus_{g\in G}(S^{-1}R)_g$ where $(S^{-1}R)_g=\{r/s\ \ / \ r\in h(R), s\in S \text{ and } \deg r=g\deg s \}$.\par 
A $G$-graded ideal $P$ of a $G$-graded ring $R$ is called $G$-graded prime ideal of $R$ if $P\ne R$ and if whenever $r$ and $s$ are homogeneous elements of $R $ such that $rs\in P$, then either $r\in P$ or $s\in P$. The $G$-graded spectrum or homogeneous spectrum of $R$ is the set of all $G$-graded prime ideals of $R$, it is denoted by $G\mathrm{Spec}R$ . A graded ideal $M$ of $R$ is said to be graded maximal ideal of $R$ if $M\ne R$ and if $J$ is a graded ideal of $R$ such that $M\subseteq J\subseteq R$, then $J=M$ or $J=R$, the set of all $G$-graded maximal ideals of $R$ is denoted by $G\mathrm{Max}R$.  \par 
For a commutative ring $R$ and ideal $I$  of $R$, the variety of $I$ is the subset $V(I):=\{P\in \mathrm{Spec}R\ / I\subseteq P\}$. Then the collection $\{V(I)\ / I \text{ ideal of } R\}$ satisfies the axioms for the closed sets of a topology on $\mathrm{Spec}R$, called the Zariski topology. Note that for each ideal $I$, the set $\mathrm{Spec}R-V(I)$ is an open subset, it is denoted $D(I)$. Recall that, the collection $D(f)$ where $f$ runs through $R$ forms a basis of opens. Similarly, let $R$ be a $G$-graded commutative ring and $I$ a $G$-graded ideal of $R$, the $G$-variety of $I$ is the subset $V_G(I):=\{P\in G\mathrm{Spec}R\ / I\subseteq P\}$. The collection $\{V_G(I)\ / I \text{ is a } G\text{-graded ideal of } R\}$ satisfies the axioms for the closed sets of a topology on $G\mathrm{Spec}R$, called the Zariski topology, see  \cite{Refai}, \cite{Dara}, \cite{Daran}. Note that,   the collection $D(r)$ where $r$ runs through $h(R)$ forms a basis of opens for the Zariski topology.
\par 
Let $G=\Bbb Z_2=\Bbb Z/2\Bbb Z$ and $R$ be $\Bbb Z_2$-graded commutative ring. Then $R=R_0\oplus R_1$, where $R_0$ is a subring of $R$ and $R_1$ is an $R_0$-module such that $R_1R_1\subseteq R_0$.  Note that $R_1R_1$  is an ideal of $R_0$, we denote it by $R_1^2$. Similarly $R_1^3$ is an $R_0$-submodule of $R_1$. If $P$ is a $\Bbb Z_2$-graded prime ideal of $R$, then $P\cap R_0$ is a prime ideal of $R_0$. This yields a natural map $\Bbb Z_2\mathrm{Spec}R\to \mathrm{Spec}R_0$. 
\begin{example}\label{NGradedPolynome}
Let $A$ be a commutative ring, then the polynomial ring $R=A[X]$ is $\Bbb Z_2$-graded ring with the grading  $R_0= \oplus_nA(X^{2n})$ and $R_1= \oplus_nA(X^{2n+1})$.  In fact every $\Bbb N$-graded (respectively $\Bbb Z$-graded ) commutative ring $R$ has a natural $\Bbb Z_2$-grading with $R_0'=\oplus_nR_{2n}$ and $R'_1=\oplus_nR_{2n+1}$.
\end{example} 
\begin{example}
Let $A$ be a commutative ring and $X=(X_{\alpha})_{\alpha}$ be a set of indeterminate over $A$ and $(a_{\alpha \beta})_{\alpha,\beta}$ be a family of elements of $A$. Consider the ideal $I$ generated by all elements $X_{\alpha}X_{\beta}-a_{\alpha\beta}$. Then $R:=\dfrac{A[X]}{I}$ is a $\Bbb Z_2$-graded ring with the grading $R_0=A$ and $R_1 $ is  the $A$-submodule of $R$ generated by all $X_{\alpha}+I$.
\end{example} 
\begin{remark} \textit{Matrix ring representation of a $\Bbb Z_2$-graded commutative ring.} Let $R$ be a $\Bbb Z_2$-graded commutative ring. If $r=r_0+r_1, s=s_0+s_1\in R=R_0\oplus R_1$, then $rs=(r_0s_0+r_1s_1)+(r_0s_1+s_0r_1)$, that  is $(rs)_0=r_0s_0+r_1s_1$ and $(rs)_1=r_0s_1+s_0r_1$. So, one can identifies $R$ with the matrix ring $$R'=\left\{ \begin{pmatrix}
r_0&r_1\\ r_1&r_0
\end{pmatrix}\ / \ r_0\in R_0 ,\ r_1\in R_1 \right\}$$ 
\end{remark}
\par The purpose of this paper is to establish a strong relation between $\Bbb Z_2$-graded prime ideals of a $\Bbb Z_2$-graded commutative ring $R$ and prime ideals of $R_0$ via the natural map $\Bbb Z_2\mathrm{Spec}R\to \mathrm{Spec}R_0 $. It is proved  that, this map is one to one, and in fact it is a homeomorphism with respect to the Zariski topologies. In fact, to fulfill this goal, we give the form of a $\Bbb Z_2$-graded prime ideal $P$ from its degree zero homogeneous part $P\cap R_0$, their expression lies on the fact that $P\cap R_0\in V(R_1^2)$ or $P\cap R_0\in D(R_1^2)$. Precisely, every $\Bbb Z_2$-graded ideal can be discovered from its degree zero homogeneous part.  
\section{\bf $\Bbb Z_2$-Graded ideals}
We start this section with some useful properties for $\Bbb Z_2$-graded ideals of a $\Bbb Z_2$-graded commutative ring. We will see that, when $R$ is a strongly $\Bbb Z_2$-graded commutative ring, those ideals are no thing but as ideals of $R_0$.    
\begin{proposition}\label{Gideal}
Let $R$ be a $\Bbb Z_2$-graded ring and $J$ be an ideal of $R$. The following statements are equivalent.
\begin{enumerate}
\item $J$ is a $\Bbb Z_2$-graded ideal of $R$.
\item $J=I+R'$ where $I$ is an ideal of $R_0$ and $R'$ is an $R_0$-submodule of $R_1$ with $IR_1\subseteq R'$ and $R_1R'\subseteq I$. 
\end{enumerate}
\end{proposition}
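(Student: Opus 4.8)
The plan is to prove the two implications by taking the obvious candidates for $I$ and $R'$ in one direction and directly verifying the ideal axioms in the other. For $(1)\Rightarrow(2)$, I would simply set $I:=J\cap R_0$ and $R':=J\cap R_1$; since $J$ is $\Bbb Z_2$-graded, $J=(J\cap R_0)\oplus(J\cap R_1)=I+R'$. That $I$ is an ideal of $R_0$ and $R'$ an $R_0$-submodule of $R_1$ is immediate from $J$ being an ideal together with $R_0R_0\subseteq R_0$, $R_0R_1\subseteq R_1$. The two compatibility conditions are then short computations using the grading: $IR_1\subseteq J\cap R_1=R'$ because $I\subseteq J$ and $IR_1\subseteq R_0R_1\subseteq R_1$; symmetrically $R_1R'\subseteq J\cap R_0=I$ because $R'\subseteq J$ and $R_1R'\subseteq R_1R_1\subseteq R_0$.

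For $(2)\Rightarrow(1)$, the first task is to check that $J=I+R'$ is actually an ideal of $R$ — this is where the hypotheses $IR_1\subseteq R'$ and $R_1R'\subseteq I$ are used, and it is the only step that is not essentially formal. Writing a general element of $R$ as $r_0+r_1$ and one of $J$ as $a+b$ with $a\in I$, $b\in R'$, I would expand $(r_0+r_1)(a+b)=r_0a+r_0b+r_1a+r_1b$ and place each term: $r_0a\in R_0I\subseteq I$, $r_0b\in R_0R'\subseteq R'$, $r_1a\in R_1I=IR_1\subseteq R'$ (using commutativity), and $r_1b\in R_1R'\subseteq I$; hence the product lies in $I+R'=J$. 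Additivity being clear, $J$ is an ideal.

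It then remains to see that this ideal is $\Bbb Z_2$-graded and that one recovers $I=J\cap R_0$, $R'=J\cap R_1$. Since $I\subseteq R_0$ and $R'\subseteq R_1$ and $R_0\cap R_1=0$, the sum $J=I+R'$ is internal direct, so for $x\in J$ the decomposition $x=a+b$ ($a\in I$, $b\in R'$) coincides with the homogeneous decomposition $x=x_0+x_1$ in $R$ by uniqueness of homogeneous components; thus $x_0=a\in J$ and $x_1=b\in J$, giving $J=(J\cap R_0)\oplus(J\cap R_1)$. The identifications $I=J\cap R_0$ and $R'=J\cap R_1$ follow the same way: if $y\in J\cap R_0$, write $y=a+b$ with $a\in I$, $b\in R'$, so $b=y-a\in R_0\cap R_1=0$ and $y=a\in I$, and dually for $R'$.

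I do not expect a genuine obstacle here; the one place requiring care is the closure of $J=I+R'$ under multiplication by homogeneous elements of degree $1$, which is exactly what the two conditions $IR_1\subseteq R'$ and $R_1R'\subseteq I$ are designed to guarantee, and which also quietly uses that $R$ is commutative so that $R_1I=IR_1$.
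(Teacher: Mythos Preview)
Your proposal is correct and follows essentially the same approach as the paper: for $(1)\Rightarrow(2)$ you both take $I=J\cap R_0$, $R'=J\cap R_1$ and read off the inclusions from degree considerations, and for $(2)\Rightarrow(1)$ you both expand $(r_0+r_1)(a+b)$ and sort the four terms into $I$ and $R'$ using the hypotheses. Your write-up is in fact slightly more careful than the paper's, since you explicitly verify that $J$ is $\Bbb Z_2$-graded and that $I=J\cap R_0$, $R'=J\cap R_1$ are recovered, whereas the paper leaves this implicit.
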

\begin{proof}
Let $J$ be a $\Bbb Z_2$-graded ideal of $R$. Then $J=I+R'$ where $I:=J\cap R_0$ is an ideal of $R_0$ and $R':=J\cap R_1$ is an $R_0$-submodule of $R_1$. If $x\in R_1$ and $y\in R'$, then $xy\in J$ is an element of degree $0$, so $xy\in J\cap R_0=I$. It follows that $R_1R'\subseteq I$. If $i\in I$ and $y\in R_1$, then $iy\in J$ is an element of degree $1$, so $iy\in J\cap R_1=R'$. Therefore $IR_1\subseteq R'$.\par 
Let $J=I+R'$ where $I$ is an ideal of $R_0$ and $R'$ is an $R_0$-submodule of $R_1$ with $IR_1\subseteq R'$ and $R_1R'\subseteq R'$. Clearly $J$ is a subgroup of $R$. Let $x=i+r'\in I+R'$ and $ y=a+r\in R=R_0+R_1$. Then $xy=(ia+r'r)+(ir+ar')\in I+R'=J$.
\end{proof}
If $R'$ is an $R_0$-submodule of $R_1$. Then the residual $R'$ by $R_1$ is defined as follow $$(R':_{R_0}R_1)=\{a\in R_0\ / aR_1\subseteq R'\}$$ It is an ideal of $R_0$. So, the condition $IR_1\subseteq R'$ is equivalent to $I\subseteq (R':_{R_0}R_1)$.
\begin{remark}\label{Class} As a consequence, we have the two classes of $\Bbb Z_2$-graded ideals, which will appear for our discussion. 
\begin{enumerate}
\item If $I$ is an ideal of $R_0$, then $I+IR_1$ is a $\Bbb Z_2$-graded ideal of $R$. Indeed, $IR_1\subseteq IR_1$ and $R_1(IR_1)= IR_1R_1\subseteq IR_0=I$. 
\item If $R'$ is a submodule of $R_1$, then $(R':_{R_0}R_1)+R'$ is a $\Bbb Z_2$-graded ideal of $R$. Indeed; $ (R':_{R_0}R_1)R_1\subseteq R'$ and $ (R'R_1)R_1\subseteq R'R_0\subseteq R'$ that is $R'R_1\subseteq (R':_{R_0}R_1)$.
\end{enumerate} 
\end{remark}
\begin{definition}
A $G$-graded commutative ring is called strongly graded if $R_gR_{g'}=R_{gg'}$ for all $g,g'\in G$. 
\end{definition}
Note that, if $R$ is a $\Bbb Z_2$-graded commutative ring, then $R$ is strongly graded if and only if $R_1^2=R_0$. \par 
The following result establish a relation between graded ideals of a strongly $\Bbb Z_2$-graded commutative ring $R$ and ideals of $R_0$. 
\begin{proposition}\label{SGideal}
Let $R$ be a strongly $\Bbb Z_2$-graded commutative ring. The following statements hold.
\begin{enumerate}
\item $R_1$ is a finitely generated $R_0$-module. 
\item $R_1$ is a multiplication module.
\item If $I$ and $I'$ are ideals of $R_0$ such that $IR_1\subseteq I'R_1$, then $I\subseteq I'$.
\item The $\Bbb Z_2$-graded ideals of $R$ are $ I+IR_1$ where $I$ is an ideal of $R_0$.
\end{enumerate}
\end{proposition}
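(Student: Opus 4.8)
The plan is to reduce everything to the single consequence of strong grading that $R_1^2=R_0$, i.e.\ that there exist $a_1,\dots,a_n,b_1,\dots,b_n\in R_1$ with $\sum_{i=1}^n a_ib_i=1$. Each of the four assertions then follows by a short manipulation with this relation together with Proposition~\ref{Gideal}.

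First I would prove (1): for $x\in R_1$ write $x=\sum_i(b_ix)a_i$ and observe $b_ix\in R_1R_1=R_0$, so $R_1=R_0a_1+\cdots+R_0a_n$ is generated by $a_1,\dots,a_n$. For (3), multiply the hypothesis $IR_1\subseteq I'R_1$ by $R_1$ and cancel using $R_1R_1=R_0$: $I=IR_0=(IR_1)R_1\subseteq(I'R_1)R_1=I'R_0=I'$. This ``cancellation'' is the engine behind (2) and (4) as well.

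For (2), I would show that an arbitrary $R_0$-submodule $R'$ of $R_1$ equals $IR_1$ where $I:=R_1R'$; note $I$ is an ideal of $R_0$ since $R_1R'\subseteq R_1R_1=R_0$, and $IR_1=(R_1R')R_1=R'(R_1R_1)=R'R_0=R'$, using $1\in R_0$. Hence every submodule of $R_1$ is extended from an ideal of $R_0$, which is exactly the assertion that $R_1$ is a multiplication module. For (4), Remark~\ref{Class}(1) already gives that $I+IR_1$ is a $\Bbb Z_2$-graded ideal for every ideal $I$ of $R_0$; conversely, given a $\Bbb Z_2$-graded ideal $J$, Proposition~\ref{Gideal} writes $J=I_0+R'$ with $I_0=J\cap R_0$, $R'=J\cap R_1$, $I_0R_1\subseteq R'$ and $R_1R'\subseteq I_0$. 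Multiplying the last inclusion by $R_1$ gives $R'=R_0R'=(R_1R_1)R'=R_1(R_1R')\subseteq R_1I_0=I_0R_1$, and together with $I_0R_1\subseteq R'$ this forces $R'=I_0R_1$, so $J=I_0+I_0R_1$. I would also remark that the correspondence $I\mapsto I+IR_1$ is a bijection, since intersecting $I+IR_1$ with $R_0$ recovers $I$.

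None of the individual steps is delicate; the only thing to get right — and the conceptual core of the proposition — is the observation that strong grading allows one to multiply through by $R_1$ and treat $R_1R_1$ as $R_0$, which simultaneously yields the finite generation, the multiplication-module property, the cancellation in (3), and the normal form in (4).
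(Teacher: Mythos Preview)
Your proof is correct and follows essentially the same approach as the paper: both extract generators $a_i,b_i\in R_1$ from $1=\sum a_ib_i$ and repeatedly exploit $R_1^2=R_0$ to cancel. The only cosmetic differences are that in (2) you take $I=R_1R'$ whereas the paper uses $I=(R':_{R_0}R_1)$, and in (4) you argue directly from $R_1R'\subseteq I_0$ rather than invoking (2) and (3) as the paper does---both routes are equally short.
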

\begin{proof}
\begin{enumerate}
\item Since $R_1^2=R_0$, there exist  $a_i,b_i\in R_1$ such that $1=\displaystyle\sum_{i=1}^na_ib_i$. Then $R_1=(a_1,\ldots,a_n)$. Indeed, if $x\in R_1$, then $x= \displaystyle\sum_{i=1}^n(xb_i)a_i$. The elements $xb_i$ are of degree zero, so they are in $R_0$. Therefore $R_1=(a_1,\ldots,a_n)$.
\item Let $R'$ be an $R_0$-submodule of $R_1$. Clearly, $(R':_{R_0}R_1)R_1\subseteq R'$. Let $x\in R'$, then $x=\displaystyle\sum_{i=1}^n(xb_i)a_i$. Each  $xb_i$ is an element of $(R':_{R_0}R_1)$, in fact if $c\in R_1$, then $(xb_i)c=(b_ic)x\in R'$ since $b_ic\in R_0$ and $x\in R'$. Thus $R'=(R':_{R_0}R_1)R_1$. Therefore $R_1$ is a multiplication module.
\item If $IR_1\subseteq I'R_1$, then $IR_1^2\subseteq I'R_1^2$, so $I\subseteq I'$.
\item Let $J$ be a $\Bbb Z_2$-graded ideal of $R$, then $J=I+R'$ where $I$ is an ideal of $R_0$ and $R'$ is an $R_0$-submodule of $R_1$ such that $IR_1\subseteq R' $ and $R'R_1\subseteq I$. Since $R_1$ is a multiplication module, there is an ideal  $I'$ of $R_0$ such that $R'=I'R_1$. But $IR_1\subseteq R'=I'R_1$, so $I\subseteq I'$. On other hand, $I'=I'R_1^2=R'R_1\subseteq I $. Thus $I=I'$ and $ J=I+IR_1$. Conversely, if $J=I+IR_1$ where $I$ is an ideal of $R_0$, then $J$ is  a $\Bbb Z_2$-graded ideal (see the Remark \ref{Class}). 
\end{enumerate}
\end{proof}
\begin{remark}
Let $R$ be a strongly $\Bbb Z_2$-graded ring. The map $J\mapsto J\cap R_0$ realize a bijection between the set of $\Bbb Z_2$-graded ideals of $R$ and the set of ideals of $R_0$.  
\end{remark}
One can show that, for a  strongly $\Bbb Z_2$-graded commutative ring, the $\Bbb Z_2$-graded ideals of $R$ are $(R':_{R_0}R_1)+R'$ where $R'$ is an $R_0$-submodule of $R_1$. It follows that the map $J\mapsto J\cap R_1$ realize a bijection between the set of $\Bbb Z_2$-graded ideals of $R$ and the set of $R_0$-submodule  $R_1$. 
\section{\bf $\Bbb Z_2$-Graded prime ideals}
\begin{definition}
Let $R$ be a $G$-graded commutative ring and $P$ be a $G$-graded ideal of $R$. Then $P$ is called $G$-graded prime ideal  (or homogeneous prime ideal) of $R$ if $P\ne R$ and if whenever $r$ and $s$ are homogeneous elements of $R $ such that $rs\in P$, then either $r\in P$ or $s\in P$
\end{definition}
\begin{remark}\label{primeGprime} A $G$-graded prime ideal of a $G$-graded ring $R$ is not necessarily a prime ideal. Let $R=\Bbb Z+i\Bbb Z$ (where $i^2=-1$) as a $\Bbb Z_2$-graded ring with $R_0=\Bbb Z$ and $R_1=i\Bbb Z$. It is easy to see that  $P:=2\Bbb Z+i2\Bbb Z$ is a $\Bbb Z_2$-graded prime ideal of $R$. But $P$ is not a prime ideal of $R$, since $(3+i)(3-i)=10\in P$ and $3+i,3-i\not\in P$. 
\end{remark}
Recall from \cite{Lu} that if $A$ is a commutative ring and $M$ is an $A$-module, then a prime submodule of $M$ is a proper submodule $N$ of $M$ having the property that $am\in N$ implies that $a\in (N:_SM)$ or $m\in N$ for each $a\in S$ and $m\in M$.\par 
The following Theorem describes the $\Bbb Z_2$-graded prime ideals of $R$ from prime ideals of $R_0$ and prime submodules of $R_1$.    
\begin{theorem}\label{Gprime}
Let $Q$ be a $\Bbb Z_2$-graded ideal of $R$. Then $Q$ is a $\Bbb Z_2$-graded prime ideal of $R$ if and only if 
\begin{enumerate}
\item either $Q=p+R_1$ where $p\in V(R_1^2)$, or
\item $Q=(R':_{R_0}R_1)+R' $ where $R'$ is a prime submodule of $R_1$ such that $R_1^3\not\subseteq R'$. 
\end{enumerate}
\end{theorem}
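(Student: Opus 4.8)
The plan is to build everything on the structure theorem for graded ideals, Proposition \ref{Gideal}: since $Q$ is $\Bbb Z_2$-graded we write $Q=I+R'$ with $I=Q\cap R_0$ an ideal of $R_0$ and $R'=Q\cap R_1$ an $R_0$-submodule of $R_1$ satisfying $IR_1\subseteq R'$ and $R_1R'\subseteq I$. I will use throughout that $R_0\cap R_1=0$, so that $Q\cap R_0=I$ and $Q\cap R_1=R'$, and that a homogeneous element has degree $0$ or $1$, so graded-primeness of $Q$ is tested by running through the degree patterns $(0,0),(0,1),(1,1)$ of a product $rs\in Q$ with $r,s$ homogeneous. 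For the direct implication I would split on whether $R'=R_1$ or $R'\subsetneq R_1$; since prime submodules are proper, these cases produce exactly alternatives (1) and (2) and do not overlap. In the first case $Q=I+R_1$ and $R_1R'=R_1^2\subseteq I$, while $I$ is prime because it is proper ($Q\ne R$) and $ab\in I\subseteq Q$ with $a,b\in R_0$ homogeneous forces $a\in Q\cap R_0=I$ or $b\in I$; so $p:=I\in V(R_1^2)$ and $Q=p+R_1$, which is (1).

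In the second case, fix $x\in R_1\setminus R'$. The inclusion $IR_1\subseteq R'$ gives $I\subseteq(R':_{R_0}R_1)$, and conversely $a\in(R':_{R_0}R_1)$ gives $ax\in R'\subseteq Q$ with $x\notin Q$ (because $x\in R_1\setminus R'=R_1\setminus(Q\cap R_1)$), so graded-primeness forces $a\in Q\cap R_0=I$; hence $I=(R':_{R_0}R_1)$. Next $R'$ is a prime submodule of $R_1$: it is proper, and $am\in R'\subseteq Q$ with $a\in R_0$, $m\in R_1$ forces $a\in Q\cap R_0=(R':_{R_0}R_1)$ or $m\in Q\cap R_1=R'$. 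Finally $R_1^3\not\subseteq R'$: were it contained, then $R_1^2\subseteq(R':_{R_0}R_1)=I$, so $x^2\in R_1^2\subseteq I\subseteq Q$ would put the homogeneous element $x$ into $Q\cap R_1=R'$, a contradiction. This gives (2).

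For the converse, the ideals in (1) and (2) are $\Bbb Z_2$-graded by Proposition \ref{Gideal} (with $I=p$, $R'=R_1$) and Remark \ref{Class}(2) respectively, so only graded-primeness is at stake. When $Q=p+R_1$ with $p$ prime and $R_1^2\subseteq p$, one computes $Q\cap R_0=p$ (so $Q\ne R$); for a homogeneous product $rs\in Q$, if one factor has degree $1$ it lies in $R_1\subseteq Q$, and if both have degree $0$ then $rs\in Q\cap R_0=p$ and primeness of $p$ concludes. When $Q=(R':_{R_0}R_1)+R'$ with $R'$ a prime submodule of $R_1$ and $R_1^3\not\subseteq R'$, I would first recall the standard fact that $\mathfrak p:=(R':_{R_0}R_1)$ is then a prime ideal of $R_0$ (proper since $R'\ne R_1$, and if $ab\in\mathfrak p$ with $b\notin\mathfrak p$ pick $m\in R_1$ with $bm\notin R'$, so $(ab)m=a(bm)\in R'$ and primeness of $R'$ give $a\in\mathfrak p$), and note $Q\cap R_0=\mathfrak p$, $Q\cap R_1=R'$. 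Then the $(0,0)$ pattern is closed by primeness of $\mathfrak p$, and the $(0,1)$ pattern by the prime-submodule property of $R'$.

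The step I expect to be the real obstacle is the $(1,1)$ pattern of the converse in case (2): given $r,s\in R_1$ with $rs\in R_0\cap Q=\mathfrak p$, one must deduce $r\in R'$ or $s\in R'$, and the only remaining hypothesis is $R_1^3\not\subseteq R'$. I would argue by contradiction: assume $r,s\notin R'$. Since $rs\in(R':_{R_0}R_1)$, for every $t\in R_1$ we have $rst\in R'$; rewriting $rst=s(rt)$ with $rt\in R_0$, the prime-submodule property of $R'$ and $s\notin R'$ give $rt\in\mathfrak p$. Hence for all $t,t'\in R_1$, $r(tt')=(rt)t'\in R'$, and $r\notin R'$ forces $tt'\in\mathfrak p$; thus $R_1^2\subseteq\mathfrak p=(R':_{R_0}R_1)$, so $R_1^3=R_1^2R_1\subseteq R'$, contradicting the hypothesis. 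Beyond keeping track, in these repeated uses of the prime-submodule property, of which factor is the even one in $R_0$ and which is the odd one in $R_1$, the rest of the argument is bookkeeping.
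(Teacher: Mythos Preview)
Your proof is correct and follows essentially the same route as the paper: decompose $Q=I+R'$ via Proposition~\ref{Gideal}, split on $R'=R_1$ versus $R'\subsetneq R_1$ for the forward direction, and for the converse verify graded-primeness degree-pattern by degree-pattern, with the $(1,1)$ case in alternative~(2) handled by a contradiction forcing $R_1^2\subseteq(R':_{R_0}R_1)$ and hence $R_1^3\subseteq R'$. The only cosmetic differences are that the paper dispatches case~(1) of the converse via the isomorphism $R/(p+R_1)\cong R_0/p$ rather than by degree patterns, and in the $(1,1)$ step of case~(2) it reaches $R_1^2\subseteq(R':_{R_0}R_1)$ by using primeness of $(R':_{R_0}R_1)$ on the ideal product $(rR_1)(r'R_1)$ rather than by iterating the prime-submodule property as you do.
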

\begin{proof}
($\Leftarrow$): Let $p$ be a prime ideal of $R_0$ containing $R_1^2$ . Then $p+R_1$ is $\Bbb Z_2$-graded ideal of $R$ and  is a prime ideal of $R$ since $R/(p+R_1)=R_0/p$ is an integral domain, so it is a $\Bbb Z_2$-graded prime ideal of $R$.\par 
Let $R'$ be prime submodule of $R_1$ such that $R_1^3\not\subseteq R'$. Then $(R':_{R_0}R_1)+R'$ is a graded ideal of $R$. Let $a,b\in R_0$ such that $ab\in (R':_{R_0}R_1)+R'$, then $ab\in (R':_{R_0}R_1)$. Since $R'$ is a prime submodule of $R_1$, $(R':_{R_0}R_1)$ is a prime ideal of $R_0$. Thus  $a\in (R':_{R_0}R_1)$ or $b\in (R':_{R_0}R_1)$. If $a\in R_0 $ and $r\in R_1$ such that $ar\in (R':_{R_0}R_1)+R'$, then $ar\in R'$. It follows that $a\in (R':_{R_0}R_1)$ or $r\in R'$ since $R'$ is a prime submodule of $R_1$. Now, let $r,r'\in R_1 $ such that $rr'\in (R':_{R_0}R_1)+R'$. Assume that  $r,r'\not\in R'$. We have $rr'\in (R':_{R_0}R_1)$, so  $rr'R_1^2\subseteq (R':_{R_0}R_1)$, that is $(rR_1)(r'R_1)\subseteq (R':_{R_0}R_1)$. Since $rR_1$ and $r'R_1$ are ideals of $R_0$ and $(R':_{R_0}R_1)$ is a prime ideal of $R_0$, it follows that $rR_1\subseteq (R':_{R_0}R_1)$ or $r'R_1\subseteq (R':_{R_0}R_1) $, that is $rR_1^2\subseteq R'$ or $r'R_1^2\subseteq R'$. Since $r,r'\not\in R'$ and $R'$ is a prime submodule, we get $R_1^2\subseteq (R':_{R_0}R_1)$, so that  $R_1^3\subseteq R'$, a contradiction. Thus $r\in R'$ or $r'\in R'$. 
\par 
($\Rightarrow$): Let $Q$ be a graded prime ideal of $R$. Then, by  Proposition \ref{Gideal}, $Q=p+R'$ where $p$ is an ideal of $R_0$ and $R'$ is a submodule of $R_1$ such that $pR_1\subseteq R' $ and $R'R_1\subseteq p$. It is easy to see that $p$ is a prime ideal of $R_0$. Indeed, if $a,b\in R_0$ such that $ab\in p$, then $ab\in Q$, so $a\in Q$ or $b\in Q$. That is $a\in p$ or $b\in p$.\par 
First case,  $R_1\subseteq Q$: In this case $R_1\subseteq R'$ that is $R'=R_1$. Therefore $Q=p+R_1 $. Moreover $R_1^2\subseteq Q\cap R_0=p$. That is $p$ is a prime ideal of $R_0$ containing $R_1^2$. \par 
Second case $R_1\not\subseteq Q$: In this case $R'\ne R_1$. Let $a\in R_0$ and $r\in R_1$ such that $ar\in R'$. We see that $ar\in Q$, so $a\in Q$ or $r\in Q$ that is $a\in p\subseteq (R':_{R_0}R_1)$ or $r\in R'$. It follows that $R'$ is a prime submodule of $R_1$. Clearly, $p\subseteq (R':_{R_0}R_1)$. Let $a\in (R':_{R_0}R_1)$. Since $R'\ne R_1$, there exist $b\in R_1$ such that $b\not\in R'$. But $ab\in R'$, so that $ab\in Q $. Since $Q$ is a $\Bbb Z_2$-graded prime ideal of $R$ and $a,b$ are homogeneous elements, $a\in Q$ or $b\in Q$. Thus $a\in p=Q\cap R_0$ since $b\not\in R'$. It follows that $p=(R':_{R_0}R_1)$.
Now, if $R_1^3\subseteq R'$, then $R_1^2\subseteq (R':_{R_0}R_1)+R'=Q$. Pick $b\in R_1-R'$, then $b^2\in Q$, so $b\in Q\cap R_1=R'$, a contradiction. Thus $R_1^3\not\subseteq R'$.  
\end{proof}
In oder to determine the homogeneous spectrum of a strongly $\Bbb Z_2$-graded commutative ring, we need the following result. 
\begin{proposition}\label{Comax}
Let $R$ be a $\Bbb Z_2$-graded commutative ring. 
\begin{enumerate}
\item If $p$ is a prime ideal of $R_0$ such that $R_1^2\not\subseteq p$, then $(pR_1:_{R_0}R_1)=p$.
\item Let $Q$ be a $\Bbb Z_2$-graded ideal of $R$ such that $R_1^2+Q\cap R_0=R_0$. Then $Q$ is a graded prime ideal if and only if $Q=p+pR_1$ where $p$ is a prime ideal of $R_0$. 
\end{enumerate}
\end{proposition}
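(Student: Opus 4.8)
The plan is to treat the two parts separately: part~(1) is a short residual computation, and part~(2) is deduced from Theorem~\ref{Gprime}. For part~(1), the inclusion $p\subseteq(pR_1:_{R_0}R_1)$ is clear, so only the reverse needs argument. If $a\in(pR_1:_{R_0}R_1)$ then $aR_1\subseteq pR_1$, and multiplying by $R_1$ gives $aR_1^2\subseteq pR_1^2\subseteq p$. Since $R_1^2$ is an ideal of $R_0$ with $R_1^2\not\subseteq p$ and $p$ is prime, the relation $(a)R_1^2\subseteq p$ forces $a\in p$; this finishes part~(1).

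For part~(2), I first note that in either direction $R_1^2\not\subseteq Q\cap R_0$: since $Q\cap R_0\ne R_0$, the containment $R_1^2\subseteq Q\cap R_0$ would give $R_1^2+Q\cap R_0=Q\cap R_0\ne R_0$, contradicting the hypothesis. Suppose first $Q=p+pR_1$ with $p$ a prime ideal of $R_0$. By Remark~\ref{Class}(1) this is a graded ideal with $Q\cap R_0=p$, so the hypothesis reads $R_1^2+p=R_0$, in particular $R_1^2\not\subseteq p$, and part~(1) gives $Q=(pR_1:_{R_0}R_1)+pR_1$. By Theorem~\ref{Gprime} it suffices to check that $R':=pR_1$ is a prime submodule of $R_1$ with $R_1^3\not\subseteq R'$. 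First $R'\ne R_1$, since $(R':_{R_0}R_1)=p\ne R_0$. Next, if $ar\in pR_1$ with $a\in R_0$, $r\in R_1$ and $a\notin p$, then $arR_1\subseteq pR_1^2\subseteq p$, so $rR_1\subseteq p$ by primality, hence $rR_1^2\subseteq pR_1$; writing $1=x+q$ with $x\in R_1^2$ and $q\in p$ gives $r=rx+rq\in pR_1$, which is the prime-submodule condition. Finally $R_1^3\subseteq pR_1$ would give $R_1^4\subseteq pR_1^2\subseteq p$, whence $R_1^2\subseteq p$, a contradiction, so $R_1^3\not\subseteq R'$. Theorem~\ref{Gprime} then shows $Q$ is graded prime. (One could instead verify directly that $R/Q$ has no homogeneous zero-divisors, the decomposition $1=x+q$ again handling the degree-$1$ times degree-$1$ case.)

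Conversely, assume $Q$ is graded prime and set $p:=Q\cap R_0$, a prime ideal with $R_1^2\not\subseteq p$. Theorem~\ref{Gprime} excludes the alternative $Q=p+R_1$ (which would force $R_1^2\subseteq p$), so $Q=(R':_{R_0}R_1)+R'$ with $R':=Q\cap R_1$ a prime submodule, $R_1^3\not\subseteq R'$, and $p=(R':_{R_0}R_1)$. It remains to identify $R'$ with $pR_1$. The inclusion $pR_1\subseteq R'$ holds because $Q$ is a graded ideal (Proposition~\ref{Gideal}). For the reverse, take $r\in R'$ and write $1=x+q$ with $x=\sum_i a_ib_i\in R_1^2$ ($a_i,b_i\in R_1$) and $q\in p$; then $rq\in pR_1$ and each $ra_i\in R_0\cap Q=p$, so $rx=\sum_i(ra_i)b_i\in pR_1$ and hence $r\in pR_1$. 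Thus $R'=pR_1$ and $Q=p+pR_1$.

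The one genuinely delicate point, and the only place where the comaximality hypothesis $R_1^2+Q\cap R_0=R_0$ is truly used, is the identification $R'=pR_1$ in the forward direction, together with the analogous degree-$1$ times degree-$1$ check in the converse: without being able to write $1=x+q$ with $x\in R_1^2$ and $q\in p$, one cannot split an element of $R'$ into a piece lying in $R_1\cdot p$ and a piece in $pR_1^2\subseteq pR_1$. The remainder is routine bookkeeping with Proposition~\ref{Gideal} and Theorem~\ref{Gprime}.
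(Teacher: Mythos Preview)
Your proof is correct and follows essentially the same approach as the paper: part~(1) is identical, and part~(2) proceeds via Theorem~\ref{Gprime} together with the decomposition $1=x+q$ with $x\in R_1^2$, $q\in p$, to verify that $pR_1$ is a prime submodule with $R_1^3\not\subseteq pR_1$ and to identify $R'$ with $pR_1$. The only minor difference is that in the forward direction you obtain $ra_i\in p$ directly from $r\in Q$, whereas the paper shows $b_ir\in(R':_{R_0}R_1)$ via the residual condition; your route is slightly more direct but the underlying idea is the same.
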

\begin{proof}
\begin{enumerate}
\item Clearly, $p\subseteq (pR_1:_{R_0}R_1)$. Let $a\in (pR_1:_{R_0}R_1)$, then $aR_1\subseteq pR_1$, so $aR_1^2\subseteq pR_1^2\subseteq p$, so that $a\in p$ since $R_1^2\not\subseteq p$. 
\item Since $R_1^2+Q\cap R_0=R_0$, $1=\displaystyle\sum_{i=1}^na_ib_i+\alpha$ where $a_i,b_i\in R_1$ and $\alpha\in Q\cap R_0$. If $Q$ is a graded prime ideal of $R$, then $R_1\not\subseteq Q$ since $R_1^2\not\subseteq Q$. By Theorem \ref{Gprime}, $Q=(R':_{R_0}R_1)+R'$ where $R'$ is a prime submodule of $R_1$ with $R_1^3\not\subseteq R_1$. We see that $p:=(R':_{R_0}R_1)=Q\cap R_0$ is a prime ideal and $pR_1\subseteq R'$. Let $r\in R'$, then $r=\displaystyle\sum_{i=1}^n(b_ir)a_i +\alpha r$. If $r'\in R_1$, then $(b_ir)r'=(b_ir')r\in R'$. It follows that $b_ir\in (R':_{R_0}R_1)=p$. So $r= \displaystyle\sum_{i=1}^n(b_ir)a_i +\alpha r\in pR_1$. Thus $R'=pR_1$, that is $Q=p+pR_1$. Conversely, assume that $Q=p+pR_1$ where $p$  is a prime ideal of $R_0$, in fact $p=Q\cap R_0$. Since $R_1^2+p=R_0$, $R_1^2\not\subseteq p$. By the previous result $p=(pR_1:_{R_0}R_1)$. Now, it is enough to show that $pR_1$ is a prime submodule of $R_1$ and $R_1^3\not\subseteq pR_1$. Let $a\in R_0$ and $r\in R_1$ such that $ar\in pR_1$. We have $r=\displaystyle\sum_{i=1}^n(b_ir)a_i+\alpha r$, so that $ar=\displaystyle\sum_{i=1}^n(ab_ir)a_i+\alpha ar $. We see that $a(b_ir)=(ar)b_i\in (pR_1)R_1\subseteq p$, so $a\in p$ or $b_ir\in p$ for all $i$. If $a\in p$, then  $a\in (pR_1,R_1)$. If $a\not\in p$, then $r=\displaystyle\sum_{i=1}^n(b_ir)a_i+\alpha r\in pR_1$. On other hand $pR_1\ne R_1$ since $(pR_1:R_1)=p\ne R_0$. Thus $pR_1$ is a prime submodule of $R_1$. If  $R_1^3\subseteq pR_1$, then $(R_1^2)^2\subseteq pR_1^2\subseteq p$, and in this case $R_1^2\subseteq p$, which is not compatible with the hypothesis. It follows that $p+pR_1$ is a $\Bbb Z_2$-graded prime ideal of $R$.     
\end{enumerate} 
\end{proof}
Now, we have the following Corollary, which describes the homogeneous spectrum of a strongly $\Bbb Z_2$-graded commutative ring.
\begin{corollary}\label{StrGprime}
Let $R$ be a strongly $\Bbb Z_2$-graded commutative ring. Then 
$$\Bbb Z_2\mathrm{Spec}R=\{p+pR_1\ / \ p\in \mathrm{Spec} R\} .$$
\end{corollary}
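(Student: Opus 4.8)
The plan is to derive this directly from Proposition \ref{Comax}(2). The key observation is that strong grading is equivalent to $R_1^2 = R_0$, so for \emph{every} $\Bbb Z_2$-graded ideal $Q$ of $R$ the comaximality hypothesis $R_1^2 + (Q\cap R_0) = R_0$ is automatically satisfied, since already $R_1^2 = R_0$. Hence Proposition \ref{Comax}(2) applies without any restriction.

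For the inclusion $\Bbb Z_2\mathrm{Spec}R \subseteq \{p + pR_1\ /\ p \in \mathrm{Spec}R_0\}$: given $Q \in \Bbb Z_2\mathrm{Spec}R$, the observation above lets us invoke Proposition \ref{Comax}(2), which yields $Q = p + pR_1$ with $p = Q\cap R_0$ a prime ideal of $R_0$. For the reverse inclusion: given $p \in \mathrm{Spec}R_0$, the ideal $p + pR_1$ is $\Bbb Z_2$-graded by Remark \ref{Class}(1), and decomposing along $R = R_0 \oplus R_1$ shows its degree-zero part is exactly $p$; since $R_1^2 + p = R_0$, the converse direction of Proposition \ref{Comax}(2) shows $p + pR_1$ is a $\Bbb Z_2$-graded prime ideal of $R$.

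There is essentially no obstacle here: the content is recognizing that strong grading trivializes the comaximality condition in Proposition \ref{Comax}. As an alternative route one could instead combine Proposition \ref{SGideal}(4), which says every graded ideal of $R$ has the form $I + IR_1$ for an ideal $I$ of $R_0$, with Theorem \ref{Gprime} and Proposition \ref{Comax}(1) to identify when $I$ is forced to be prime; but passing through Proposition \ref{Comax}(2) is the shortest path. One may also note, though it is not needed for the stated set equality, that by Proposition \ref{SGideal}(3) the assignment $p \mapsto p + pR_1$ is injective, so the right-hand side is a faithful parametrization of $\Bbb Z_2\mathrm{Spec}R$ by $\mathrm{Spec}R_0$, which is the content that will be upgraded to a homeomorphism later.
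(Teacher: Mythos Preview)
Your proof is correct and follows exactly the paper's approach: the paper's own argument is the single sentence ``This follows from the previous Proposition and the fact that, for every prime ideal $p$ of $R_0$, $R_1^2+p=R_0$,'' which is precisely your observation that strong grading trivializes the comaximality hypothesis of Proposition~\ref{Comax}(2). Your write-up is simply a more detailed unpacking of the same idea.
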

\begin{proof}
This follows from the previous Proposition and the fact that, for every prime ideal $p$ of $R_0$, $R_1^2+p=R_0$. 
\end{proof}
The following Corollary point out a class of $\Bbb Z_2$-graded commutative ring for which prime ideals and $\Bbb Z_2$-graded prime ideals coincide.   
\begin{corollary}
Let $R$ be a $\Bbb Z_2$-graded ring such that $R_1^2\subseteq \sqrt{0}$. Then 
\begin{enumerate}

\item The $\Bbb Z_2$-graded prime of $R$ are $p+R_1$ where $p$ is a prime ideal of $R_0$. 
\item An ideal of $R$ is $\Bbb Z_2$-graded prime if and only if it is prime.
\end{enumerate}
\end{corollary}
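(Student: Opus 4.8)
The plan is to reduce everything to one elementary observation: under the hypothesis $R_1^2\subseteq\sqrt{0}$, every homogeneous element of degree $1$ is nilpotent. Indeed, if $r\in R_1$ then $r^2\in R_1^2\subseteq\sqrt{0}$, hence $r\in\sqrt{0}$; thus $R_1\subseteq\sqrt{0}$, and in particular $R_1$ is contained in \emph{every} prime ideal of $R$ and in every $\Bbb Z_2$-graded prime ideal of $R$.

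First I would establish (1). For the inclusion ``$\supseteq$'', let $p$ be a prime ideal of $R_0$ and set $J:=p+R_1$. The only point that is not completely formal is that $J$ is an ideal of $R$ (not merely an additive subgroup): for $x=a+r$ and $y=b+s$ in $R_0\oplus R_1$ one has $(xy)_0=ab+rs$, and $rs\in R_1^2\subseteq\sqrt{0}\subseteq p$ gives $(xy)_0\in p$, while $(xy)_1=as+rb\in R_1$. The surjection $R_0\to R/J$ has kernel $R_0\cap J=p$, so $R/J\cong R_0/p$ is an integral domain; hence $J$ is a prime ideal of $R$, a fortiori a $\Bbb Z_2$-graded prime ideal, and it is exactly an ideal of type (1) in Theorem \ref{Gprime} since $R_1^2\subseteq p$. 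For the inclusion ``$\subseteq$'', let $Q$ be a $\Bbb Z_2$-graded prime ideal. Given $r\in R_1$, we have $r^N=0\in Q$ for some $N$; writing $r^N=r\cdot r^{N-1}$ with both factors homogeneous and using the definition of a graded prime ideal, a straightforward induction on $N$ shows $r\in Q$. Therefore $R_1\subseteq Q$, so $Q=(Q\cap R_0)+R_1$, and $Q\cap R_0$ is a prime ideal of $R_0$ (the easy implication already recorded in the proof of Theorem \ref{Gprime}). Alternatively one may quote Theorem \ref{Gprime} directly and discard case (2): $R_1\subseteq Q$ forces $R'=Q\cap R_1=R_1$, which contradicts $R_1^3\not\subseteq R'$.

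Next I would deduce (2). The ``only if'' direction is immediate from (1): a $\Bbb Z_2$-graded prime ideal $Q$ equals $p+R_1$ with $p$ prime, so $R/Q\cong R_0/p$ is a domain and $Q$ is prime. For the ``if'' direction, let $I$ be a prime ideal of $R$. Then $\sqrt{0}\subseteq I$, and since $R_1\subseteq\sqrt{0}$ we obtain $R_1\subseteq I$; consequently $I=(I\cap R_0)+R_1$ is a $\Bbb Z_2$-graded ideal, and a prime ideal that is graded is automatically $\Bbb Z_2$-graded prime. Hence the two notions coincide.

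In short, there is no substantial obstacle here: the whole statement follows from the nilpotency of the elements of $R_1$ together with Theorem \ref{Gprime}. The only place that requires a moment's care is checking that $p+R_1$ is honestly an ideal of $R$, and that is precisely where the hypothesis $R_1^2\subseteq\sqrt{0}$ is used.
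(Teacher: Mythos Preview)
Your proof is correct and takes essentially the same approach as the paper: both use that the hypothesis forces $R_1$ into every (graded) prime, reducing (1) to the type-(1) case of Theorem~\ref{Gprime} and (2) to the isomorphism $R/(p+R_1)\cong R_0/p$. The paper's own proof of (1) is simply ``Immediate from \ref{Gprime}'', so your expanded version with the explicit observation $R_1\subseteq\sqrt{0}$ is, if anything, more informative.
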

\begin{proof}
\begin{enumerate}
\item Immediate from \ref{Gprime}.
\item If $Q$ is a $\Bbb Z_2$-graded prime ideal of $R$, then $R_1R_1\subseteq Q$, so $R_1\subseteq Q$. By Theorem \ref{Gprime}, $Q=p+R_1$ where $p$ is a prime ideal of $R_0$. It is easy to see $R/Q=R_0/p$ is an integral domain, so $Q$ is a prime ideal of $R$. Let $P$ be any prime ideal of $R$ and set $p=P\cap R_0$. Since $R_1^2\subseteq \sqrt{0}\subseteq p\subseteq Q$, $R_1\subseteq Q$. Therefore $p+R_1\subseteq Q$. If $z=a+r\in Q$ where $a\in R_0$ and $r\in R_1$, then $a=z-r\in Q\cap R_0=p$, so $z\in p+R_1$. It follows that $Q=p+R_1$ is a $\Bbb Z_2$-graded prime ideal of $R$. 

\end{enumerate}
\end{proof}
\begin{example}
Let $R$ be a commutative ring and $M$ be an $R$-module. Consider the trivial ring extension $S=R\propto M$ as $\Bbb Z_2$-graded ring with $S_0=R$ and $S_1=M$. Clearly, $S_1^2=0$. Therefore the $\Bbb Z_2$-graded prime ideals and prime ideals coincides.    
\end{example}
An immediate consequence of the  Corollary \ref{StrGprime} is as follow;  if $R$ is a strongly $\Bbb Z_2$-graded commutative ring, then the homogeneous spectrum of $R$ and the prime spectrum of $R_0$ are homeomorphic with respect to the Zariski topologies. The next Theorem extend this result by dropping the strongly graded hypothesis.  
\begin{theorem}\label{homeo}
Let $R$ be a $\Bbb Z_2$-graded commutative ring. Then the map $\varphi: \Bbb Z_2\mathrm{Spec}R\to \mathrm{Spec}R_0$, $\varphi(P)=P\cap R_0$ is an homeomorphism of topological spaces with respect to the Zariski topologies. 
\end{theorem}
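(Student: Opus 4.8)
The plan is to check that $\varphi$ is a continuous, closed bijection. Everything is organised around the dichotomy, already visible in Theorem~\ref{Gprime}, of whether the contracted prime $p:=P\cap R_0$ lies in $V(R_1^2)$ or in $D(R_1^2)$. Continuity is the easy half: for $a\in R_0$ and a graded ideal $P$ of $R$ one has $a\in P$ if and only if $a\in P\cap R_0$, so $\varphi^{-1}(D(a))=\{P\in\Bbb Z_2\mathrm{Spec}R\ /\ a\notin P\}$ is the basic graded-open set $D(a)$; since the $D(a)$ with $a\in R_0$ form a basis of $\mathrm{Spec}R_0$, this makes $\varphi$ continuous.

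Next I would prove injectivity. Let $P,Q\in\Bbb Z_2\mathrm{Spec}R$ with $P\cap R_0=Q\cap R_0=:p$. By Proposition~\ref{Gideal} both are graded, so $P=p+(P\cap R_1)$ and $Q=p+(Q\cap R_1)$, and it suffices to show $P\cap R_1=Q\cap R_1$. If $R_1^2\subseteq p$, then for every $r\in R_1$ we have $r^2\in R_1^2\subseteq p\subseteq P$, whence $r\in P$ since $P$ is a graded prime and $r$ is homogeneous; thus $P\cap R_1=R_1=Q\cap R_1$. If $R_1^2\not\subseteq p$, then $R_1\not\subseteq Q$ (else $R_1^2\subseteq Q\cap R_0=p$), so fix $c\in R_1\setminus Q$; for $n\in P\cap R_1$ the element $nc$ lies in $P\cap R_0=p\subseteq Q$, and since $c\notin Q$ and $Q$ is graded prime, $n\in Q$, i.e.\ $n\in Q\cap R_1$. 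By symmetry $P\cap R_1=Q\cap R_1$, so $P=Q$.

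For surjectivity, fix $p\in\mathrm{Spec}R_0$ and put $R':=\{n\in R_1\ /\ nR_1\subseteq p\}$, an $R_0$-submodule of $R_1$. If $R_1^2\subseteq p$ then $R'=R_1$ and $p+R_1$ is a graded ideal with quotient the domain $R_0/p$, hence a graded prime over $p$. If $R_1^2\not\subseteq p$, then $R'\neq R_1$, and a short computation (using only that $p$ is prime and $R_1^2\not\subseteq p$) shows $(R':_{R_0}R_1)=p$, that $R'$ is a proper prime submodule of $R_1$, and that $R_1^3\not\subseteq R'$ --- since $R_1^3\subseteq R'$ would force $R_1^4=(R_1^2)^2\subseteq p$ and hence $R_1^2\subseteq p$. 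By Theorem~\ref{Gprime}(2), $p+R'$ is then a $\Bbb Z_2$-graded prime ideal, and it clearly lies over $p$. Thus $\varphi$ is a bijection, and in both cases the unique graded prime over $p$ is $p+\{n\in R_1\ /\ nR_1\subseteq p\}$.

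It remains to show $\varphi$ is closed, for which I would prove $\varphi(V_G(J))=V(J\cap R_0)$ for every graded ideal $J$ of $R$ (these being all the closed sets). The inclusion $\subseteq$ is clear. For $\supseteq$, let $\mathfrak q\in\mathrm{Spec}R_0$ with $J\cap R_0\subseteq\mathfrak q$ and let $P=\mathfrak q+R'$ be the unique graded prime over $\mathfrak q$, where $R'=\{n\in R_1\ /\ nR_1\subseteq\mathfrak q\}$; then $J\subseteq P$, since $J\cap R_0\subseteq\mathfrak q\subseteq P$ and, for $n\in J\cap R_1$ and $c\in R_1$, the product $nc$ lies in $J\cap R_0\subseteq\mathfrak q$, so $nR_1\subseteq\mathfrak q$ and $n\in R'\subseteq P$. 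Hence $\mathfrak q=\varphi(P)\in\varphi(V_G(J))$, and $\varphi$ is a continuous closed bijection, i.e.\ a homeomorphism. The step I expect to be the real obstacle is the case $p\in D(R_1^2)$: the preimage of $p$ is $p+\{n\in R_1\ /\ nR_1\subseteq p\}$ and not in general $p+pR_1$ (the two coincide exactly when $R_1^2+p=R_0$, which is the comaximality hypothesis of Proposition~\ref{Comax}); alternatively one can reach the same conclusion by localising at $R_0\setminus p$, which renders $R$ strongly graded and reduces matters to Corollary~\ref{StrGprime} and Proposition~\ref{SGideal}.
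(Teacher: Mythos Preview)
Your proof is correct and follows essentially the same route as the paper: continuity via basic opens, surjectivity via the same submodule $R'=\{n\in R_1\mid nR_1\subseteq p\}$ together with Theorem~\ref{Gprime}, and closedness of $\varphi$. The only notable simplifications in the paper are that injectivity is handled uniformly in one line (for $x\in P\cap R_1$ one has $x^2\in P\cap R_0=Q\cap R_0\subseteq Q$, hence $x\in Q$), avoiding your case split, and closedness is checked on basic closed sets via $\varphi(V(r))=V(r^2)$ for homogeneous $r$ rather than your global identity $\varphi(V_G(J))=V(J\cap R_0)$.
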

\begin{proof}
Let $P,Q\in \Bbb Z_2\mathrm{Spec}R$ with $P\cap R_0=Q\cap R_0$. If $x\in P\cap R_1$, then $x^2\in P\cap R_0=Q\cap R_0$, so $x^2\in Q$, hence $x\in Q\cap R_1$. It follows that $P=P\cap R_0+P\cap R_1\subseteq Q\cap R_0+Q\cap R_1=Q$. By the same argument we have $Q\subseteq P$, thus $P=Q$.\par 
Let $p\in \mathrm{Spec}R_0$. Consider $R'=\{x\in R_1\ / xR_1\subseteq p\}$. Then $R'$ is clearly an $R_0$-submodule of $R_1$. \\ 
First case, $R_1^2\subseteq p$. By Theorem \ref{Gprime}, $p+R_1$ is a $\Bbb Z_2$-graded prime ideal of $R$ and $\varphi(p+R_1)=p$.\\ 
Second case, $R_1^2\not\subseteq p$. Note that using the Theorem \ref{Gprime}, it enough to show that $R'$ is a prime submodule of $R_1$ with   $R_1^3\not\subseteq  R'$ and $p=(R':_{R_0}R_1)$. If  $R_1^3\subseteq R'$, then $R_1^2R_1^2=R^4\subseteq p$, since $p$ is prime, we get $R_1^2\subseteq p$, a contradiction, so $R_1^3\not\subseteq R'$. Let $a\in R_0$ and $x\in R_1$ such that $ax\in R'$. Then $axR_1\subseteq p$. Since $p$ is a prime ideal, $a\in p$ or $xR_1\subseteq p$. If $xR_1\subseteq p$, then $x\in R'$. If $a\in p$, then $(aR_1)R_1\subseteq pR_1^2\subseteq p$, so $aR_1\subseteq R'$ that is $a\in (R':_{R_0}R_1)$. Therefore $R'$ is  prime submodule of $R_1$.  Next, we show that $(R':_{R_0}R_1)=p$. We have $(pR_1)R_1=pR_1^2\subseteq p$, so $pR_1\subseteq R'$, that is $p\subseteq (R':_{R_0}R_1)$. Now, let $a\in (R':_{R_0}R_1)$. Then $aR_1\subseteq R'$, that is $aR_1^2=(aR_1)R_1\subseteq p $. Since $p$ is a prime ideal of $R_0$ and $R_1^2\not\subseteq p$, we get $a\in p$. Thus $(R':_{R_0}R_1)=p$. Finally, $p+R'$ is a $\Bbb Z_2$-graded prime ideal of $R$ and $\varphi(p+R')=p $.\par 
If $a\in R_0$, then $\varphi^{-1}(V(a))=V(a)$. Thus $\varphi$ is a continuous map.\par 
If $r\in R$ is a homogeneous elements, then $\varphi(V(r))=V(r^2)$. Thus $\varphi^{-1}$ is also a continuous map. It follows that $\varphi$ is an homeomorphism with respect to the Zariski topologies.  
\end{proof}
Next, we list a consequences of the previous Theorem.  Recall that, a $G$-graded commutative ring has Noetherian homogeneous spectrum if $G\mathrm{Spec}R$ is a Noetherian topological space with respect to the Zariski topology. The homogeneous dimension of $R$, denoted by $h\dim R$, is the maximum length $n$ of a chain $P_0\subset P_1\subset\ldots\subset P_n$ of homogeneous prime ideals of $R$.
\begin{corollary}
Let $R$ be $\Bbb Z_2$-graded commutative ring. The following statements hold.
\begin{enumerate}
\item $R$ has Noetherian homogeneous spectrum if and only if $R_0$ has Noetherian spectrum. 
\item $h\dim R=\dim R_0$.
\end{enumerate} 
\end{corollary}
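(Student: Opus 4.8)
The plan is to deduce both assertions directly from Theorem \ref{homeo}, which supplies a homeomorphism $\varphi\colon\Bbb Z_2\mathrm{Spec}R\to\mathrm{Spec}R_0$, $\varphi(P)=P\cap R_0$.

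For statement (1), I would invoke the fact that being a Noetherian topological space is a topological invariant: a homeomorphism carries any strictly descending chain of closed subsets to a strictly descending chain of closed subsets and conversely, so one space satisfies the descending chain condition on closed sets if and only if the other does. Since, by definition, $R$ has Noetherian homogeneous spectrum exactly when $\Bbb Z_2\mathrm{Spec}R$ is a Noetherian space, and $R_0$ has Noetherian spectrum exactly when $\mathrm{Spec}R_0$ is, the equivalence is immediate from Theorem \ref{homeo}.

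For statement (2), I would first observe that $\varphi$ is not merely a homeomorphism but an isomorphism of partially ordered sets: for $P,Q\in\Bbb Z_2\mathrm{Spec}R$ one has $P\subseteq Q$ if and only if $P\cap R_0\subseteq Q\cap R_0$. One implication is trivial. For the other, if $P\cap R_0\subseteq Q\cap R_0$ and $x\in P\cap R_1$, then $x^2\in P\cap R_0\subseteq Q$, so $x\in Q$ because $Q$ is $\Bbb Z_2$-graded prime and $x$ is homogeneous; hence $P=P\cap R_0+P\cap R_1\subseteq Q$. (Equivalently, one may note that any homeomorphism of spectra automatically preserves the specialization order $P\subseteq Q\iff Q\in\overline{\{P\}}$.) Consequently $\varphi$ restricts to a length-preserving bijection between chains $P_0\subset\cdots\subset P_n$ of homogeneous prime ideals of $R$ and chains $p_0\subset\cdots\subset p_n$ of prime ideals of $R_0$, so the suprema of their lengths coincide, i.e. $h\dim R=\dim R_0$.

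The only point needing a little care is the order-reflecting property of $\varphi$ used in (2); but this is precisely the $x\mapsto x^2$ computation already carried out in the injectivity part of the proof of Theorem \ref{homeo}, now applied with an inclusion in place of an equality, so no genuinely new difficulty arises.
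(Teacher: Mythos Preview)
Your proposal is correct and follows essentially the same route as the paper: both parts are deduced directly from the homeomorphism of Theorem~\ref{homeo}, and for (2) both the paper and you argue that $\varphi$ sets up a length-preserving correspondence between chains of homogeneous primes in $R$ and chains of primes in $R_0$. Your treatment is in fact slightly more careful than the paper's, which simply asserts ``we see that $P_0\subset\cdots\subset P_n$'' for the inverse direction, whereas you explicitly verify the order-reflecting property via the $x\mapsto x^2$ argument (or, alternatively, via the specialization order).
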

\begin{proof}
\begin{enumerate}
\item Immediate.
\item In $ P_0\subset P_1\subset\ldots\subset P_n$ is a chain of homogeneous ideals of $R$, then $\varphi(P_0)\subset \varphi(P_1)\subset\ldots\subset \varphi(P_n)$ is a chain of prime ideals of $R_0$. Conversely, if $p_0\subset p_1\subset\ldots\subset p_n$ is a chain of prime ideals of $R_0$. For each $i$, $p_i=\varphi(P_i)$ for some $P_i\in \Bbb Z_2\mathrm{Spec}R$, we see that $ P_0\subset P_1\subset\ldots\subset P_n$. it follows that $h\dim R=\dim R_0$.   
\end{enumerate}
\end{proof}
We close this section by a discuss of the homogeneous radical of a graded ideal. Recall that, if $I$ is a $G$-graded ideal of a $G$-graded commutative ring $R$, then the graded radical of $I$, denoted by $\mathrm{Grad}(I)$, is 
$$\mathrm{Grad}(I)=\{x=\sum_{g\in G}x_g\ / x_g\in \sqrt{I} \text{ for all} g\in G\}$$ 
Note that $\mathrm{Grad}(I)$ is a $G$-graded ideal of $R$. 
\begin{lemma}
Let  $R$ be a $G$-graded commutative ring and $I$ be a $G$-graded ideal of $R$. Then 
$$\mathrm{Grad}(I)=\cap_PP$$
Where $P$ runs through all homogeneous prime ideals of $R$ containing $I$.
\end{lemma}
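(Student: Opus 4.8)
The plan is to prove the two inclusions separately: the inclusion $\mathrm{Grad}(I)\subseteq\cap_P P$ is essentially formal, while the reverse inclusion rests on a graded analogue of Krull's lemma about ideals maximal with respect to avoiding a multiplicatively closed set.

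For $\mathrm{Grad}(I)\subseteq\cap_P P$, I would fix $x=\sum_{g\in G}x_g\in\mathrm{Grad}(I)$ and a homogeneous prime ideal $P$ with $I\subseteq P$. For each $g$ one has $x_g\in\sqrt{I}$, hence $x_g^{\,n}\in I\subseteq P$ for some $n\ge 1$; since every power $x_g^{\,k}$ is homogeneous and $P$ is $G$-graded prime, a short induction on $n$ gives $x_g\in P$. As $x$ is a finite sum of the $x_g$, we get $x\in P$, and since $P$ was arbitrary, $x\in\cap_P P$.

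For the reverse inclusion I would argue by contraposition. Suppose $x=\sum_{g\in G}x_g\notin\mathrm{Grad}(I)$, so there is an index $h$ with $x_h\notin\sqrt{I}$. Then $S=\{x_h^{\,n}:n\ge 0\}\subseteq h(R)$ is a multiplicatively closed set disjoint from $I$. The family of $G$-graded ideals $J$ with $I\subseteq J$ and $J\cap S=\emptyset$ is nonempty (it contains $I$) and closed under unions of chains, because a union of a chain of graded ideals is again graded; by Zorn's lemma it has a maximal member $P$. I claim $P$ is $G$-graded prime: it is proper since $1\in S$, and if $a,b\in h(R)$ satisfy $ab\in P$ with $a\notin P$ and $b\notin P$, then $P+Ra$ and $P+Rb$ are graded ideals (the ideal generated by a homogeneous element is graded) strictly larger than $P$, so by maximality each meets $S$, say $s=p+ra\in S$ and $s'=p'+r'b\in S$ with $p,p'\in P$ and $r,r'\in R$. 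Then $ss'=pp'+pr'b+rap'+rr'ab\in P$, contradicting $ss'\in S$. Hence $P$ is a homogeneous prime ideal containing $I$; and since $P$ is graded and $x_h\notin P$, the element $x$ cannot lie in $P$ (otherwise all its homogeneous components, in particular $x_h$, would lie in $P$). Therefore $x\notin\cap_P P$.

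I expect the only real obstacle to be the claim that a graded ideal maximal with respect to missing $S$ is $G$-graded prime. The point is to remain inside the class of graded ideals throughout — which works because $Ra$ is graded whenever $a$ is homogeneous — and to verify the primality condition only on pairs of homogeneous elements, exactly as the definition of graded prime permits; the remaining ingredients, namely the behaviour of $\sqrt{I}$ on homogeneous elements and the stability of graded ideals under unions of chains, are routine.
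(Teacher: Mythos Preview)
Your proof is correct and follows essentially the same route as the paper: both inclusions are handled separately, and the nontrivial one is proved by contraposition using the homogeneous multiplicative set $S=\{x_h^{\,n}\}$ to locate a graded prime containing $I$ and missing $x_h$. The only difference is that the paper cites an external result (Lemma~4.7 of \cite{Wu}) for the existence of such a graded prime, whereas you supply the Zorn's-lemma argument yourself; your version is thus more self-contained but otherwise identical in strategy.
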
 
\begin{proof}
It is easy to see that $\mathrm{Grad}(I)\subseteq \cap_PP$. Let $x\in \cap_PP$ and assume that $x\not\in \mathrm{Grad}(I)$. Then there exists $x_{g}$ ($g$-component of $x$) for some $g\in G$ such that $x_g\not\in \sqrt{I}$, that is $x_g^k\not\in I$ for all $k\in \Bbb N$. Consider the homogeneous multiplicative subset $S:=\{x_g^k\ / k\in \Bbb N\}$. We see that $I\cap S=\emptyset$. By [Lemma $4.7$,\cite{Wu}], there exists a graded prime ideal $P$ such that $I\subseteq P$ and $P\cap S=\emptyset$, that is $I\subseteq P$ and $x_g^k\not\in P$ for all $k\in \Bbb N$, a contradiction.
\end{proof}
For an ideal $I_0$ of $R_0$, denote $R_1[I_0]$ the subset $R_1[I_0]=\{x\in R_1\ / x^2\in I_0\}$.
\begin{lemma}
Let $R$ be a $\Bbb Z_2$-graded commutative ring and $P$ be a $\Bbb Z_2$-graded prime ideal of $R$. Then 
$$P=p+R_1[P]$$
Where $p=P\cap R_0$.
\end{lemma}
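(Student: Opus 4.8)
The plan is to reduce everything to the homogeneous decomposition of a graded ideal together with the defining property of a graded prime ideal. Since $P$ is a $\Bbb Z_2$-graded ideal, we may write $P=(P\cap R_0)+(P\cap R_1)=p+(P\cap R_1)$ (this is exactly the decomposition used in Proposition~\ref{Gideal}). Hence the whole statement reduces to proving the single identity $P\cap R_1=R_1[P]$, where $R_1[P]$ is understood as $R_1[P\cap R_0]=\{x\in R_1\ /\ x^2\in P\cap R_0\}$; note that for $x\in R_1$ one always has $x^2\in R_1R_1\subseteq R_0$, so this is the same set as $\{x\in R_1\ /\ x^2\in P\}$, which is the form convenient for the argument.

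First I would check the inclusion $P\cap R_1\subseteq R_1[P]$: if $x\in P\cap R_1$, then $x^2=x\cdot x\in P$ because $P$ is an ideal, and $x^2\in R_0$, so $x\in R_1[P]$. For the reverse inclusion $R_1[P]\subseteq P\cap R_1$, take $x\in R_1$ with $x^2\in P$; since $x$ is a homogeneous element and $P$ is a $\Bbb Z_2$-graded prime ideal, the relation $x\cdot x\in P$ with $x$ homogeneous forces $x\in P$, and combined with $x\in R_1$ this gives $x\in P\cap R_1$. Putting the two inclusions together yields $P\cap R_1=R_1[P]$, and therefore $P=p+R_1[P]$.

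I do not expect any real obstacle here: the statement is a direct unwinding of the definitions of graded ideal and graded prime ideal. The only point requiring a little care is the convention for $R_1[P]$ versus $R_1[P\cap R_0]$, and the only genuine ingredient is the elementary observation that squaring a degree-$1$ homogeneous element lands in $R_0$, so that primality tested on homogeneous elements detects exactly which squares of elements of $R_1$ lie in $P$.
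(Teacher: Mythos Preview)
Your proof is correct, and it is in fact more direct than the paper's. The paper does not argue with the graded-prime condition applied to $x\cdot x$; instead it quotes Theorem~\ref{homeo} (together with the injectivity of $\varphi$) to identify $P\cap R_1$ with the set $R'=\{x\in R_1\ /\ xR_1\subseteq p\}$, and then proves $R'=R_1[p]$ by the ideal-theoretic argument $(xR_1)^2=x^2R_1^2\subseteq p\Rightarrow xR_1\subseteq p$ using that $p$ is prime in $R_0$. Your route avoids the detour through Theorem~\ref{homeo} and uses only the definition of a graded prime ideal, which makes the lemma self-contained; the paper's route, on the other hand, records the additional description $P\cap R_1=\{x\in R_1\ /\ xR_1\subseteq p\}$ along the way, which is occasionally useful elsewhere. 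Either argument is perfectly adequate here.
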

\begin{proof}
As in the proof of Theorem \ref{homeo}, we have $P=p+R'$ where $p=P\cap R_0$ and $R'=\{x\in R_1\ / xR_1\subseteq p\}$. So, it is enough to show that $R'=R_1[p]$. Clearly, $R'\subseteq R_1[p]$. Let $x\in R_1[p]$. Then $(xR_1)^2=x^2R_1^2\subseteq p$,  hence $xR_1\subseteq p$ since $p$ is a prime ideal of $R_0$ and $xR_1$ is an ideal of $R_0$. Thus $x\in R'$.
\end{proof}
\begin{theorem}
Let $R$ be a $\Bbb Z_2$-graded commutative ring and $J$ be a $\Bbb Z_2$-graded ideal of $R$. Then 
$$\mathrm{Grad}(J)=\sqrt{J_0}+R_1[J_0]$$
Were $J_0=J\cap R_0$.
\end{theorem}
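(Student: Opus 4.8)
The plan is to compute both sides and show they coincide as subsets of $R = R_0 \oplus R_1$. First I would recall that $\mathrm{Grad}(J) = \bigcap_P P$ where $P$ runs over all $\Bbb Z_2$-graded prime ideals of $R$ containing $J$ (the first lemma above), and that by the second lemma each such $P$ has the form $P = p + R_1[p]$ with $p = P \cap R_0$. By the homeomorphism of Theorem \ref{homeo}, as $P$ ranges over $\Bbb Z_2$-graded primes containing $J$, the ideal $p = P \cap R_0$ ranges exactly over the prime ideals of $R_0$ containing $J_0 = J \cap R_0$ (since $J \subseteq P$ is equivalent, for a graded ideal, to $J_0 \subseteq p$ — here one uses that the degree-$1$ part of $J$ is itself recovered from $J_0$, which follows from the earlier description of graded ideals; alternatively one checks $J \subseteq P \iff J_0 \subseteq p$ directly via $x \in J \cap R_1 \Rightarrow x^2 \in J_0 \subseteq p \Rightarrow x \in R_1[p]$).

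Granting this, I would intersect: $\mathrm{Grad}(J) = \bigcap_{p \supseteq J_0} (p + R_1[p])$, the intersection over primes $p$ of $R_0$ containing $J_0$. The degree-$0$ part of this intersection is $\bigcap_{p \supseteq J_0} p = \sqrt{J_0}$, which is the standard fact for commutative rings. For the degree-$1$ part, an element $x \in R_1$ lies in every $p + R_1[p]$ with $p \supseteq J_0$ iff $x \in R_1[p]$ for all such $p$ (since the $R_1$-component of $p + R_1[p]$ is $R_1[p]$, as $p \subseteq R_0$ and the sum is direct), i.e. iff $x^2 \in p$ for all primes $p \supseteq J_0$, i.e. iff $x^2 \in \sqrt{J_0}$, i.e. iff $x \in R_1[\sqrt{J_0}]$. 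So the degree-$1$ part is $R_1[\sqrt{J_0}]$, and one is left to identify $R_1[\sqrt{J_0}]$ with $R_1[J_0]$.

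The main obstacle, and the one genuinely nontrivial point, is precisely this last identification: showing $R_1[\sqrt{J_0}] = R_1[J_0]$, i.e. that $x \in R_1$ with $x^2 \in \sqrt{J_0}$ already satisfies $x^2 \in J_0$. One inclusion ($R_1[J_0] \subseteq R_1[\sqrt{J_0}]$) is trivial. For the other, suppose $x \in R_1$ and $x^2 \in \sqrt{J_0}$, so $(x^2)^n = x^{2n} \in J_0$ for some $n$. If $n$ is such that $2n \geq 2$, write $x^{2n} = (x^2)^n$; we want to descend to $x^2$. This does not hold for a general ideal, so one must use gradedness: the point is that $x^2 \in R_0$ and $x^{2n} = (x^2)^n \in J_0$ shows $x^2 \in \sqrt{J_0}$, which is circular — so instead I expect the correct statement requires $J$ itself to be graded and uses $\mathrm{Grad}(J)$, not $\sqrt{J}$. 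Reexamining: since $\mathrm{Grad}(J)$ collects elements all of whose homogeneous components have a power in $J$ (not $J_0$), the degree-$1$ part of $\mathrm{Grad}(J)$ is $\{x \in R_1 : x^k \in J \text{ for some } k\}$; taking $k$ even, $x^k \in J \cap R_0 = J_0$, and taking $k$ odd, $x^k = x \cdot x^{k-1} \in J \cap R_1$, whence $x^{2k} \in J_0$ and also, multiplying by $x$, we stay in $J$. The cleanest route is therefore: show directly that $x \in R_1$ has some power in $J$ iff $x^2 \in J_0$, using that the even powers of $x$ land in $R_0$ and that $J \cap R_0 = J_0$ together with $J \cap R_1 \supseteq R_1[J_0]$-type closure — and I would verify carefully that $x^{2m} \in J_0$ for some $m$ forces $x^2 \in J_0$ only when it does, otherwise conclude the theorem's right-hand side should be read with this refined $R_1[J_0]$ notation (recall $R_1[I_0] = \{x \in R_1 : x^2 \in I_0\}$ was defined to make exactly this work). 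I would present the argument by reducing to the two lemmas and the prime-ideal description, flagging the descent $x^{2m} \in J_0 \Rightarrow x^2 \in \sqrt{J_0}$ and noting $R_1[\sqrt{J_0}] = R_1[J_0]$ is forced because $\sqrt{J_0}$ is radical and $x^2 \in \sqrt{J_0} \Rightarrow (x^2)^m \in J_0 \Rightarrow x^{2m} \in J_0$, so $x^2 \in \sqrt{J_0}$, giving $R_1[\sqrt{J_0}] = \{x : x^2 \in \sqrt{J_0}\}$; and finally that since we want equality with $\sqrt{J_0} + R_1[J_0]$, we need $R_1[\sqrt{J_0}] = R_1[J_0]$, i.e. $x^2 \in \sqrt{J_0} \Rightarrow x^2 \in J_0$, which I would prove by the observation that $x^2 \in R_0$ and if $(x^2)^m \in J_0$ then since $\sqrt{J_0}$ as computed inside $R_0$ already contains $x^2$, one uses instead the \emph{graded} radical computation to avoid needing $x^2 \in J_0$ literally — concretely, I would instead just prove $\mathrm{Grad}(J) = \sqrt{J_0} + R_1[\sqrt{J_0}]$ and then remark $R_1[\sqrt{J_0}] = R_1[J_0]$ fails in general, so the theorem as stated must intend $R_1[J_0]$ to denote $\{x \in R_1 : x^2 \in \sqrt{J_0}\}$ or the statement is about $\mathrm{Grad}$ which automatically radicalizes; given the lemma preceding it uses $R_1[P]$ with $P$ prime (so $P \cap R_0$ radical), the intended reading is consistent. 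I would therefore write the proof as: reduce to $\mathrm{Grad}(J) = \bigcap_{p \in V(J_0)}(p + R_1[p])$ via the two lemmas and Theorem \ref{homeo}, compute the degree-$0$ part as $\sqrt{J_0}$ and the degree-$1$ part as $\{x \in R_1 : x^2 \in \sqrt{J_0}\} = R_1[\sqrt{J_0}]$, and conclude, remarking that $R_1[\sqrt{J_0}] = R_1[J_0]$ holds here because for $x\in R_1$, $x^2 \in \sqrt{J_0}$ is equivalent to $x \in \mathrm{Grad}(J)\cap R_1$ which by definition means some power of $x$ lies in $J$, forcing (taking an even exponent) $x^{2k}\in J_0$; this is the delicate bookkeeping step I expect to occupy most of the proof.
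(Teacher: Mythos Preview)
Your approach is exactly the paper's: write $\mathrm{Grad}(J)=\bigcap_{p\supseteq J_0}(p+R_1[p])$ via the two preceding lemmas and the homeomorphism of Theorem~\ref{homeo}, then split the intersection into its degree-$0$ and degree-$1$ parts to obtain $\sqrt{J_0}+\{x\in R_1: x^2\in\sqrt{J_0}\}$. The paper's proof consists of precisely these lines, together with the observation (which you also supply, with more care than the paper) that for graded $P$ one has $J\subseteq P$ iff $J_0\subseteq p$.

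Your extended worry over the last identification $R_1[\sqrt{J_0}]=R_1[J_0]$ is, however, well founded and is not a gap in \emph{your} reasoning. That equality is false in general: with $R=k[t]/(t^4)$ graded by parity of exponent and $J=0$, one has $J_0=0$, $\sqrt{J_0}=(t^2)$, $R_1[J_0]=kt^3$, but $R_1[\sqrt{J_0}]=R_1$; and indeed $\mathrm{Grad}(0)=(t^2)\oplus R_1$. The paper's proof computes $\{x\in R_1:x^2\in\sqrt{J_0}\}$ and then simply labels it $R_1[J_0]$ in the final equality, which contradicts its own definition $R_1[I_0]:=\{x\in R_1:x^2\in I_0\}$. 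What both you and the paper actually establish is $\mathrm{Grad}(J)=\sqrt{J_0}+R_1[\sqrt{J_0}]$. So present the clean three-line computation and state the corrected formula (or note that the theorem's $R_1[J_0]$ should read $R_1[\sqrt{J_0}]$); do not attempt to prove $x^2\in\sqrt{J_0}\Rightarrow x^2\in J_0$, because it is not true.
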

\begin{proof}
Note that if $P$ is a $\Bbb Z_2$-graded ideal of $R$, then $J\subseteq P$ if and only if $J_0\subseteq p$ where $p=P\cap R_0$. Thus \begin{eqnarray*}
 \mathrm{Grad}(J)&=&\cap_{J_0\subseteq p}(p+R_1[p])=\cap_{J_0\subseteq p}p+\cap_{J_0\subseteq p}R_1[p]\\ &=&\sqrt{J_0}+\cap_{J_0\subseteq p}R_1[p]
 \end{eqnarray*}
On other hand, 
\begin{eqnarray*} \cap_{J_0\subseteq p}R_1[p]&=&\{x\in R_1\ / x^2\in p, \text{ for all } p\supseteq J_0\}\\ &=&\{x\in R_1\ / x^2\in \sqrt{J_0}\}=R_1[J_0]
\end{eqnarray*}
It follows that $R'=R_1[p]$ as desired.
\end{proof}
\section{\bf $\Bbb Z_2$-Maximal graded ideals}
\begin{definition}
Let $R$ be a $G$-graded ring. A graded ideal $M$ of $R$ is said to be graded maximal ideal of $R$ if $M\ne R$ and if $J$ is a graded ideal of $R$ such that $M\subseteq J\subseteq R$, then $J=M$ or $J=R$.
\end{definition}
\begin{example}
Let $R=\Bbb Z+i\Bbb Z$ be a $\Bbb Z_2$-graded ring with  $R_0=\Bbb Z$ and $R_1=i\Bbb Z$. Let $M=2\Bbb Z+i2\Bbb Z$. Then $M$ is a $\Bbb Z_2$-graded maximal ideal of $R$ and it is not a maximal (prime ) ideal of $R$. 
\end{example}
\begin{proof}
Let $J$ be a $\Bbb Z_2$-graded ideal of $R$ with $M\subsetneq J\subseteq R$. Let $x=a+ib\in J-M$ with $(a,b)\in \Bbb Z^2$. Then $a\not\in 2\Bbb Z$ or $b\not\in 2\Bbb Z$. If $a\not\in \Bbb Z$, then $a=2k+1$ where $k\in \Bbb Z$. So $1=a-(2k)\in J$, hence $J=R$. If $b\not\in 2\Bbb Z$, then $b=(2l)+1$ where $l\in \Bbb Z$, so $i= ib- 2il \in J$. Hence $-1=i^2\in J$. Thus $J=R$. Moreover $M$ is not a prime ideal see the Remark  \ref{primeGprime}, hence it is not a maximal ideal.  
\end{proof}
In the following Theorem we give a characterization of $\Bbb Z_2$-graded maximal ideals of $R$.
\begin{theorem}
Let $R$ be a $\Bbb Z_2$-graded commutative ring and $Q$ be a $\Bbb Z_2$-graded ideal of $R$. Then $Q$ is a $\Bbb Z_2$-graded maximal ideal of $R$ if and only if 
\begin{enumerate}
\item either $Q=p+R_1$ where $p\in V(R_1^2)$ is a maximal ideal of $R_0$, or 
\item $Q=(R':_{R_0}R_1)+R'$ where $R'$ is a maximal  submodule of $R_1$ such that $R_1^3\not\subseteq R'$.
\end{enumerate} 
\end{theorem}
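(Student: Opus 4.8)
The plan is to derive the statement from Theorem \ref{Gprime}, since it is the exact ``maximal'' analogue of that ``prime'' characterization. The first step I would record is the standard observation that every $\Bbb Z_2$-graded maximal ideal $M$ of $R$ is automatically $\Bbb Z_2$-graded prime: if $r,s\in h(R)$ with $rs\in M$ and $s\notin M$, then $M+Rs$ is a graded ideal (because $s$ is homogeneous) strictly containing $M$, so $M+Rs=R$; writing $1=m+as$ with $m\in M$ and multiplying by $r$ gives $r=rm+a(rs)\in M$. Hence a $\Bbb Z_2$-graded maximal ideal $Q$ necessarily has one of the two forms produced by Theorem \ref{Gprime}, and the whole problem reduces to deciding, inside each of those two forms, exactly which ideals are maximal.

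For the implication $(\Rightarrow)$ I would treat the two forms separately. If $Q=p+R_1$ with $p$ prime and $R_1^2\subseteq p$, then for any ideal $p'$ with $p\subsetneq p'\subsetneq R_0$ one has $R_1^2\subseteq p'$, so $p'+R_1$ is a graded ideal (Remark \ref{Class}(1)) lying strictly between $Q$ and $R$; maximality of $Q$ forbids this, so $p$ is a maximal ideal of $R_0$. If $Q=(R':_{R_0}R_1)+R'$ with $R'$ a prime submodule and $R_1^3\not\subseteq R'$, then for any $R_0$-submodule $R''$ with $R'\subsetneq R''\subsetneq R_1$ the ideal $J:=(R'':_{R_0}R_1)+R''$ is graded (Remark \ref{Class}(2)), its homogeneous part of degree $1$ is exactly $R''$ and its degree $0$ part is proper (as $R''\neq R_1$), so $Q\subsetneq J\subsetneq R$; again this contradicts maximality, so $R'$ is a maximal submodule of $R_1$.

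For the implication $(\Leftarrow)$, Theorem \ref{Gprime} already gives that $Q$ is a proper graded prime ideal in both cases, so I only need to rule out a graded ideal $J$ with $Q\subsetneq J\subsetneq R$. Write $J=I+R''$ with $I=J\cap R_0$ and $R''=J\cap R_1$ via Proposition \ref{Gideal}; then $I$ is proper (else $J=R$), $I\supseteq Q\cap R_0$ and $R''\supseteq Q\cap R_1$. In case (1), $R''=R_1$ (since $R_1\subseteq R''\subseteq R_1$) and $I=p$ (since $p$ is maximal and $I$ is a proper ideal containing it), so $J=Q$. In case (2) put $p=(R':_{R_0}R_1)$; here the crucial remark is that $R'$ being a maximal submodule makes $R_1/R'$ a simple $R_0$-module, so $p=\mathrm{Ann}_{R_0}(R_1/R')$ is a maximal ideal of $R_0$. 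Maximality of $R'$ forces $R''=R'$ or $R''=R_1$. If $R''=R'$, the condition $IR_1\subseteq R''$ from Proposition \ref{Gideal} gives $I\subseteq p$, hence $I=p$ and $J=Q$. If $R''=R_1$, then $R_1^2\subseteq J\cap R_0=I$, and since $p$ is maximal and $I$ is proper, $I=p$, whence $R_1^2\subseteq p$, i.e. $R_1^3\subseteq R'$, contradicting the hypothesis. So $J=Q$ in every case, and $Q$ is graded maximal.

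I expect the only genuinely delicate point to be the subcase $R''=R_1$ in the $(\Leftarrow)$ direction of case (2): one must notice both that $(R':_{R_0}R_1)$ is automatically a maximal ideal (via simplicity of $R_1/R'$) and that the hypothesis $R_1^3\not\subseteq R'$ is precisely what excludes this subcase. Everything else is routine separation of homogeneous components using Proposition \ref{Gideal} and Remark \ref{Class}.
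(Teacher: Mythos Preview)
Your proof is correct. The forward direction and the backward direction in case~(1) match the paper's argument essentially verbatim: both of you pass through Theorem~\ref{Gprime} (using that graded maximal implies graded prime), then enlarge $p$ or $R'$ and invoke Remark~\ref{Class} to produce an intermediate graded ideal.

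The one genuine difference is the converse in case~(2). The paper does not analyze an arbitrary graded ideal $J\supseteq Q$ as you do; instead it picks a graded \emph{maximal} ideal $P\supseteq Q$, observes (as you also do) that $(R':_{R_0}R_1)$ is a maximal ideal of $R_0$, deduces $P\cap R_0=Q\cap R_0$, and then appeals to the injectivity of the map $\varphi:\Bbb Z_2\mathrm{Spec}R\to\mathrm{Spec}R_0$ from Theorem~\ref{homeo} to conclude $P=Q$. Your route avoids Theorem~\ref{homeo} entirely: you split on $R''=R'$ versus $R''=R_1$, using $IR_1\subseteq R''$ and $R_1R''\subseteq I$ from Proposition~\ref{Gideal}, and kill the second branch with the hypothesis $R_1^3\not\subseteq R'$. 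Your argument is more self-contained and makes the role of $R_1^3\not\subseteq R'$ explicit; the paper's is shorter once the homeomorphism is in hand. One small point you leave implicit when invoking Theorem~\ref{Gprime} for case~(2) is that a maximal submodule is automatically prime; you effectively prove this when you note $R_1/R'$ is simple, but it is worth stating.
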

\begin{proof}
($\Rightarrow $): Let $Q$ be a $\Bbb Z_2$-graded maximal ideal of $R$, in particular $Q$ is a $\Bbb Z_2$-graded prime ideal of $R$.\par 
First case; $R_1\subseteq Q$, from Theorem \ref{Gprime},  $Q=p+R_1$ where $p\in V(R_1^2)$. Let $p'$ be a maximal ideal containing $p$, then $Q\subseteq p'+R_1$ and $p'+R_1$ is a proper $\Bbb Z_2$-graded ideal of $R$, by maximality, $Q=p'+R_1$, so $p=p'$ is a maximal ideal of $R_0$ containing $R_1^2$. \par 
Second case; $R_1\not\subseteq Q$, by Theorem \ref{Gprime}, $Q=(R':_{R_0}R_1)+R'$ where $R' $ is a prime $R_0$-submodule of $R_1$ such that $R_1^3\not\subseteq R'$. Let $R''$ be an $R_0$-submodule of $R_1$ such that $R'\subseteq R''$ and consider the $\Bbb Z_2$-graded ideal $J:=(R'':_{R_0}R_1)+R''$. Clearly $Q\subseteq J$, by maximality $J=Q$ or $J=R$, that is $R''=R'$ or $R''=R_1$. Thus $R'$ is a maximal submodule of $R_1$.
 \\
($\Leftarrow$): Let $p$ be a maximal ideal of $R_0$ containing $R_1^2$ and $Q:=p+R_1$. Let $J$ be a $\Bbb Z_2$-graded ideal of $R$ such that $Q\subseteq J$, then $R_1=Q\cap R_1\subseteq J\cap R_1$, so $J\cap R_1=R_1$. Also $p=Q\cap R_0\subseteq J\cap R_0$. Since $p$ is a maximal ideal of $R_0$, $J\cap R_0=p$ or $J\cap R_0=R_0$ that is $J=Q$ or $J=R$. Thus $Q$ is a $\Bbb Z_2$-graded maximal ideal of $R$.\par 
Let $R'$ be a maximal submodule of $R_1$ such that $R_1^3\not\subseteq R'$ and $Q:=(R':_{R_0}R_1)+R'$. Clearly $Q$ is a proper $\Bbb Z_2$-graded ideal of $R$. Now, let $P$ be any $\Bbb Z_2$-graded maximal ideal of $R$ containing $Q$. Denote $p':=P\cap R_0$. We have $(R':_{R_0}R_1)=Q\cap R_0\subseteq P\cap R_0$. Since $R'$ is a maximal submodule of $R_1$, $(R':_{R_0}R_1)$ is a maximal ideal of $R_0$, it follows that $ p'=(R':_{R_0}R_1)$ or $p'=R_0$. If $p'=R_0$, then $P=R$ since $1\in P$, with is not compatible with the fact that $P$ is a $\Bbb Z_2$-graded maximal ideal. So  $p'=(R':_{R_0}R_1)$, hence $\varphi(Q)=\varphi(P)$ where $\varphi$ is the map defined in Theorem \ref{homeo}, by the said Theorem, we have $P=Q$. Thus $Q$ is a $\Bbb Z_2$-graded maximal ideal of $R$.    
\end{proof}
Next, we establish a relation between $\Bbb Z_2$-graded maximal ideal of $R$ and maximal ideal of $R_0$. Before, we establish the following Lemma. 
\begin{lemma}
Let $R$ be a $\Bbb Z_2$-graded ring and $R'$ be an $R_0$-submodule of $R_1$ such that $R_1^3\not\subseteq R'$. The followings statements are equivalent.
\begin{enumerate}
\item $R'$ is a maximal submodule of $R_1$.
\item $(R':_{R_0}R_1)$ is maximal ideal of $R_0$.
\end{enumerate} 
In this case $R'=(R':_{R_0}R_1)R_1$.
\end{lemma}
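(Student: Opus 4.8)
The plan is to prove the equivalence in two directions, and then establish the final formula $R' = (R':_{R_0}R_1)R_1$ as a by-product of the machinery already available. Throughout write $p := (R':_{R_0}R_1)$, which is an ideal of $R_0$, and recall from Remark~\ref{Class}(2) that $p + R'$ is a $\Bbb Z_2$-graded ideal of $R$. The key structural fact I would exploit is the correspondence between $\Bbb Z_2$-graded ideals and their degree-zero parts packaged in Theorem~\ref{homeo}: since $R_1^3 \not\subseteq R'$, the graded ideal $Q := p + R'$ is a genuine candidate for a $\Bbb Z_2$-graded prime/maximal ideal, and $\varphi(Q) = p$.

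For the implication (1) $\Rightarrow$ (2), assume $R'$ is a maximal submodule of $R_1$. Since $R_1^3 \not\subseteq R'$, by the converse direction of Theorem~\ref{Gprime} the ideal $Q = p + R'$ is a $\Bbb Z_2$-graded prime ideal of $R$, so $p$ is a prime ideal of $R_0$. To see $p$ is maximal, take an ideal $\mathfrak{a}$ of $R_0$ with $p \subseteq \mathfrak{a} \subsetneq R_0$; then $\mathfrak{a} \not\supseteq R_1^2$ (otherwise $R_1^2 \subseteq \mathfrak{a}$ would force, via $R_1^3 \subseteq \mathfrak{a}R_1 \subseteq$ \ldots, an incompatibility — more directly, $R_1^2 \subseteq p$ would give $R_1^3 \subseteq pR_1 \subseteq R'$, contradicting the hypothesis), so $\mathfrak{a} \in D(R_1^2)$ and by Proposition~\ref{Comax}(1) we have $(\mathfrak{a}R_1 :_{R_0} R_1) = \mathfrak{a}$. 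Now $R' \subseteq \mathfrak{a}R_1$: indeed $R' = (p + R')\cap R_1$ pulls back under $\varphi$, but more simply, from $p \subseteq \mathfrak{a}$ one gets $\mathfrak{a}R_1 \supseteq pR_1$ and — using the multiplication-type argument of Proposition~\ref{Comax}(2), since $R_1^2 + \mathfrak{a} = R_0$ — one shows $R' \subseteq \mathfrak{a}R_1$. If $\mathfrak{a}R_1 = R_1$ then $\mathfrak{a} = (\mathfrak{a}R_1:_{R_0}R_1) = (R_1:_{R_0}R_1) = R_0$, contradiction; so $\mathfrak{a}R_1$ is a proper submodule of $R_1$ containing $R'$, whence $R' = \mathfrak{a}R_1$ by maximality, and then $p = (R':_{R_0}R_1) = (\mathfrak{a}R_1:_{R_0}R_1) = \mathfrak{a}$. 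Thus $p$ is maximal. This argument simultaneously yields $R' = \mathfrak{a}R_1 = pR_1$, giving the asserted final formula.

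For (2) $\Rightarrow$ (1), assume $p$ is a maximal ideal of $R_0$; in particular $R_1^2 \not\subseteq p$ (else $R_1^3 \subseteq pR_1 \subseteq R'$, contradiction), so $R_1^2 + p = R_0$. Then $Q = p + R'$ has $Q \cap R_0 = p$ with $R_1^2 + Q\cap R_0 = R_0$, and by Proposition~\ref{Comax}(2) (the hypotheses of which are met: $R'$ is a prime submodule because $p$ is prime and $p = (R':_{R_0}R_1)$, and $R_1^3 \not\subseteq R'$) we get $R' = pR_1$. Now suppose $R''$ is an $R_0$-submodule of $R_1$ with $R' = pR_1 \subseteq R'' \subseteq R_1$; applying the residual functor and using Proposition~\ref{Comax}(1) gives $p = (pR_1:_{R_0}R_1) \subseteq (R'':_{R_0}R_1) \subseteq R_0$, so by maximality of $p$ either $(R'':_{R_0}R_1) = p$ or $(R'':_{R_0}R_1) = R_0$. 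In the first case $R'' \subseteq$ the submodule $(R'':_{R_0}R_1)R_1 = pR_1 = R'$ — here I would invoke that $R_1$ is a multiplication module (Proposition~\ref{SGideal}(2)) once one checks the relevant strongly-graded-type hypothesis holds after localizing at $p$, or argue directly that $R'' R_1 \subseteq pR_1 \cdot R_1 \subseteq p$ forces $R'' R_1^2 \subseteq pR_1^2$ and hence, via $R_1^2 + p = R_0$, $R'' \subseteq pR_1 = R'$ — so $R'' = R'$; in the second case $R'' = R_0 R_1 = R_1$. Hence $R'$ is maximal.

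The main obstacle I anticipate is the bookkeeping in showing $R'' \subseteq pR_1$ from $(R'':_{R_0}R_1) = p$ without the global strongly-graded hypothesis: the clean statement "$R_1$ is a multiplication module" is only available in Proposition~\ref{SGideal} under strong grading, so here one must instead run the elementary computation $R''R_1 \subseteq p \Rightarrow R''R_1^2 \subseteq pR_1^2 \subseteq p$ and then use that $R_1^2$ and $p$ are comaximal in $R_0$ (exactly as in the proof of Proposition~\ref{Comax}(2), writing $1 = \sum a_ib_i + \alpha$ with $a_i,b_i\in R_1$, $\alpha\in p$) to recover $x = \sum(b_ix)a_i + \alpha x \in pR_1$ for each $x \in R''$, since each $b_ix \in R''R_1 \subseteq p$. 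Once this is in hand, both implications close and the identity $R' = pR_1 = (R':_{R_0}R_1)R_1$ is recorded in passing.
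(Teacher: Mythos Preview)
Your $(1)\Rightarrow(2)$ argument has a genuine gap. You take an ideal $\mathfrak a$ with $p\subseteq\mathfrak a\subsetneq R_0$ and assert $R_1^2\not\subseteq\mathfrak a$, but the justification offered only establishes $R_1^2\not\subseteq p$; your ``more directly'' clause speaks about $p$, not $\mathfrak a$, and the ellipsis ``$R_1^3\subseteq\mathfrak a R_1\subseteq\ldots$'' hides the unproved inclusion $\mathfrak a R_1\subseteq R'$. Nothing rules out a maximal ideal $\mathfrak a\supsetneq p$ containing $R_1^2$, and in that case Proposition~\ref{Comax}(1) (which in any event requires its ideal to be \emph{prime}) does not apply, the comaximal decomposition $R_1^2+\mathfrak a=R_0$ is unavailable, and the chain $R'\subseteq\mathfrak a R_1$, $\mathfrak a=(\mathfrak a R_1:_{R_0}R_1)$ collapses. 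The paper bypasses all of this with a direct elementary computation: pick $\alpha\in R_0\setminus p$, choose $r\in R_1$ with $\alpha r\notin R'$, use maximality of $R'$ to write $r=\beta\alpha r+r'$ with $r'\in R'$, and deduce $1-\beta\alpha\in p$, whence $p+(\alpha)=R_0$. This is essentially the standard fact that a simple module over a commutative ring has maximal annihilator, and needs none of the structural theorems you invoke.

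In $(2)\Rightarrow(1)$ your outline is workable, but the stated reason that $R'$ is a prime submodule is insufficient: ``because $p$ is prime and $p=(R':_{R_0}R_1)$'' does not imply primeness of $R'$ in general. What makes it true here is that $p$ is \emph{maximal}: if $am\in R'$ with $a\notin p$, write $1=q+ba$ with $q\in p$, and then $m=qm+b(am)\in pR_1+R'\subseteq R'$. With that patched, your route through Proposition~\ref{Comax}(2) and the comaximal trick does close; your final paragraph correctly obtains $R''\subseteq pR_1$ via $b_ix\in R''R_1\subseteq(R'':_{R_0}R_1)=p$, though the earlier aside ``$R''R_1\subseteq pR_1\cdot R_1$'' is circular and should be dropped. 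The paper's $(2)\Rightarrow(1)$ is more direct---it never passes through Proposition~\ref{Comax}(2)---but it, too, silently uses that $R'$ is a prime submodule, so on that particular point you are no worse off than the original.
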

\begin{proof}
If $R'$ is a maximal submodule of $R_1$, then clearly $(R':_{R_0}R_1)\ne R_0$. Pike $\alpha\in R_0-(R':_{R_0}R_1)$, then $\alpha r\not\in R'$ for some $r\in R_1$. It follows that $R'\subsetneq R'+R_0(\alpha r)$, so $ R'+R_0(\alpha r)=R_1$. In particular, $r=\beta \alpha r+r'$ where $\beta\in R_0$ and $r(\in R'$. So that $(1-\beta \alpha)r=r'\in R'$. Thus $1-\beta \alpha\in (R':_{R_0}R_1)$, that is $(R':_{R_0}R_1)+(\alpha)=R_0$. Hence $(R':_{R_0}R_1)$ is  a maximal ideal of $R_0$. For the converse, assume that  $(R':_{R_0}R_1)$ is a maximal ideal of $R_0$. Let $R''$ be a submodule of $R_1$ with  $R'\subseteq R''$. Then $(R':_{R_0}R_1)\subseteq (R'':_{R_0}R_1)$, hence $(R'':_{R_0}R_1)=R_0$ or $(R'':_{R_0}R_1)=(R':_{R_0}R_1)$. If $(R'':_{R_0}R_1)=R_0 $, then clearly $R''=R_1$. Assume that $(R'':_{R_0}R_1)=(R':_{R_0}R_1)$. Let $x\in R''$, then $xR_1\subseteq (R'':_{R_0}R_1)=(R':_{R_0}R_1)$, so $xR_1^2\subseteq R'$. Since $R'$ is a prime submodule of $R_1$ and $R_1^2R_1=R_1^3\not\subseteq R'$, it follows that $x\in R'$. Thus $R''=R'$. Therefore $R'$ is maximal submodule of $R_1$.\par 
We see that $R_1^2\not\subseteq (R':_{R_0}R_1)$, so that $(R':_{R_0}R_1)+R_1^2=R_0$ since $(R':_{R_0}R_1)$ is a maximal ideal of $R_0$. Thus $1=\displaystyle\sum_{k=1}^na_ib_i+\alpha$ where $a_k,b_k\in R_1$ and $\alpha\in (R':_{R_0}R_1)$. Now, if $r\in R'$, then $r=\displaystyle\sum_{k=1}^nra_ib_i+\alpha r\in (R':_{R_0}R_1)R_1$ since $a_ir\in (R':_{R_0}R_1)$ and $\alpha \in (R':_{R_0}R_1)$. Therefore $R'=(R':_{R_0}R_1)R_1$.   
\end{proof}
Now, we are able to establish a relation between $\Bbb Z_2$-graded maximal ideals of $R$ and maximal ideals of $R_0$, as follow.
\begin{theorem}\label{Gmax}
Let $R$ be a $\Bbb Z_2$-graded commutative ring. Then $$\Bbb Z_2\mathrm{Max} R=\{ p+R_1/\ p\in\mathrm{Max}R_0\cap V(R_1^2) \}\cup \{ p+pR_1 \ /\ p\in \mathrm{Max }R_0\cap D(R_1^2)\}$$
\end{theorem}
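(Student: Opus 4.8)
The plan is to read off the result from the characterization of $\Bbb Z_2$-graded maximal ideals proved just above, combined with the preceding Lemma and Proposition \ref{Comax}(1). Take $Q\in\Bbb Z_2\mathrm{Max}R$. By that characterization there are two cases. If $R_1\subseteq Q$, then $Q=p+R_1$ with $p\in V(R_1^2)$ a maximal ideal of $R_0$, i.e. $p\in\mathrm{Max}R_0\cap V(R_1^2)$, so $Q$ lies in the first family. If $R_1\not\subseteq Q$, then $Q=(R':_{R_0}R_1)+R'$ with $R'$ a maximal submodule of $R_1$ and $R_1^3\not\subseteq R'$; setting $p:=(R':_{R_0}R_1)$, the Lemma yields that $p$ is a maximal ideal of $R_0$ and that $R'=pR_1$, whence $Q=p+pR_1$. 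Moreover $p\in D(R_1^2)$: if $R_1^2\subseteq p$ then $R_1^3=R_1^2R_1\subseteq pR_1=R'$, contradicting $R_1^3\not\subseteq R'$. So $Q$ lies in the second family, which gives the inclusion $\subseteq$.

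For the reverse inclusion, first take $p\in\mathrm{Max}R_0\cap V(R_1^2)$; since $p$ is maximal and contains $R_1^2$, the converse half of the maximal-ideal characterization shows $p+R_1$ is $\Bbb Z_2$-graded maximal. Next take $p\in\mathrm{Max}R_0\cap D(R_1^2)$, so $R_1^2\not\subseteq p$. By Proposition \ref{Comax}(1), $(pR_1:_{R_0}R_1)=p$, a maximal ideal of $R_0$. Also $R_1^3\not\subseteq pR_1$, for otherwise $R_1^2R_1^2\subseteq pR_1^2\subseteq p$ forces $R_1^2\subseteq p$ since $p$ is prime, a contradiction. Hence the Lemma (the direction from maximality of the residual, applicable because $R_1^3\not\subseteq pR_1$) shows $R':=pR_1$ is a maximal submodule of $R_1$ with $(R':_{R_0}R_1)=p$; feeding this back into the converse half of the maximal-ideal characterization, $Q=p+pR_1$ is $\Bbb Z_2$-graded maximal. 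This gives $\supseteq$, hence equality.

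One may additionally record that, by Theorem \ref{homeo}, the homeomorphism $\varphi$ restricts to a bijection $\Bbb Z_2\mathrm{Max}R\to\mathrm{Max}R_0$, so the two displayed families are disjoint (one consists of ideals containing $R_1$, the other of ideals $Q$ with $Q\cap R_1=pR_1\subsetneq R_1$) and each is parametrized injectively by the relevant subset of $\mathrm{Max}R_0$. I expect the only genuinely delicate point to be the bookkeeping in the $D(R_1^2)$ case: one must simultaneously invoke Proposition \ref{Comax}(1) to identify the residual as $p$, verify $R_1^3\not\subseteq pR_1$ so that the Lemma is applicable, and then return the resulting maximal submodule to the maximal-ideal characterization; the $V(R_1^2)$ case and the forward inclusion are direct unwindings of the cited statements.
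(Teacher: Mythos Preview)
Your proof is correct and follows essentially the same route as the paper. The paper's own argument is terser---it only spells out the $D(R_1^2)$ case (both directions), tacitly leaving the $V(R_1^2)$ case to the preceding characterization---whereas you make both cases and both inclusions explicit; the key ingredients (the maximal-ideal characterization, the Lemma giving $p=(R':_{R_0}R_1)$ maximal with $R'=pR_1$, Proposition~\ref{Comax}(1), and the $R_1^3\not\subseteq pR_1$ check via $(R_1^2)^2\subseteq p$) are identical.
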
 
\begin{proof}
Let $R'$ be a maximal submodule of $R_1$ such that $R_1^3\not\subseteq R'$ and denote $p:=(R':_{R_0}R_1)$. By the previous Lemma, $p$ is a maximal ideal of $R_0$ and $R'=pR_1$. Moreover $R_1^2\not\subseteq p$ since $R_1^3\not\subseteq R'$. Thus $(R':_{R_0}R_1)+R'=p+pR_1$. Conversely, let $p$ be a maximal ideal of $R_0$ such that $R_1^2\not\subseteq p$ and denote $R'=pR_1$. By the Proposition \ref{Comax}, $(R':_{R_0}R_1)=p$ is a maximal ideal of $R_0$.  Since $R_1^2\not\subseteq p $, $R_1^3\not\subseteq R'$. It follows from the previous Lemma that $R'$ is a maximal submodule of $R_1$.
\end{proof}
An immediate consequence, we have the following Corollary.
\begin{corollary}
Let $R$ be a strongly $\Bbb Z_2$-graded commutative ring. Then 
$$\Bbb Z_2\mathrm{Max} R=\{p+pR_1\ / \ p\in \mathrm{Max}R_0 \}$$
\end{corollary}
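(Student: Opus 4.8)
The plan is to read this off directly from Theorem \ref{Gmax}, whose statement already splits $\Bbb Z_2\mathrm{Max}R$ into the two pieces indexed by $\mathrm{Max}R_0\cap V(R_1^2)$ and $\mathrm{Max}R_0\cap D(R_1^2)$. So the only thing to do is to evaluate these two intersections under the strongly graded hypothesis. First I would recall that, by the remark following the definition of strongly graded rings, $R$ strongly $\Bbb Z_2$-graded means precisely $R_1^2=R_0$. Consequently no proper ideal of $R_0$ can contain $R_1^2$, and in particular $\mathrm{Max}R_0\cap V(R_1^2)=\emptyset$, which kills the first set in the formula of Theorem \ref{Gmax}. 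On the other hand $D(R_1^2)=\mathrm{Spec}R_0\setminus V(R_1^2)=\mathrm{Spec}R_0$ since $V(R_1^2)=V(R_0)=\emptyset$, so $\mathrm{Max}R_0\cap D(R_1^2)=\mathrm{Max}R_0$. Substituting these into Theorem \ref{Gmax} gives exactly $\Bbb Z_2\mathrm{Max}R=\{p+pR_1\ /\ p\in\mathrm{Max}R_0\}$.

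There is essentially no obstacle here; the content is entirely in Theorem \ref{Gmax} and the trivial observation $R_1^2=R_0$. If a more self-contained argument were wanted, one could instead combine Corollary \ref{StrGprime} (which describes $\Bbb Z_2\mathrm{Spec}R$ for strongly graded $R$ as $\{p+pR_1\ /\ p\in\mathrm{Spec}R_0\}$) with the homeomorphism $\varphi$ of Theorem \ref{homeo}: a $\Bbb Z_2$-graded maximal ideal is in particular $\Bbb Z_2$-graded prime, hence of the form $p+pR_1$, and since $\varphi$ is an order isomorphism of posets (being a homeomorphism whose inverse $p\mapsto p+pR_1$ is clearly inclusion-preserving) it carries maximal elements to maximal elements, i.e. $p+pR_1\in\Bbb Z_2\mathrm{Max}R$ iff $p\in\mathrm{Max}R_0$. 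Either route is a couple of lines, so I would simply cite Theorem \ref{Gmax} and the identity $R_1^2=R_0$.
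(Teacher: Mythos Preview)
Your proposal is correct and matches the paper's own approach: the paper simply says the corollary ``follows immediately from the previous'' result (Theorem \ref{Gmax}), and your first paragraph spells out exactly that deduction via $R_1^2=R_0$. The alternative route you sketch through Corollary \ref{StrGprime} and Theorem \ref{homeo} is also valid but not the one the paper uses.
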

\begin{proof}
Follows immediately from the previous Proposition. 
\end{proof}
\begin{corollary}
Let $R$ be a $\Bbb Z_2$-graded commutative ring. Then 
\begin{enumerate}
\item $\varphi$ induce an homeomorphism between $\Bbb Z_2\mathrm{Max}R$ and $\mathrm{Max}R_0$. 
\item $R$ is a $\Bbb Z_2$-graded local ring if and only if $R_0$ is a local ring.
\end{enumerate}
\end{corollary}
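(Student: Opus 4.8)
The plan is to derive both statements as formal consequences of Theorem~\ref{homeo} and the explicit list of $\Bbb Z_2$-graded maximal ideals in Theorem~\ref{Gmax}, with essentially no new computation. Since Theorem~\ref{homeo} already gives that $\varphi\colon\Bbb Z_2\mathrm{Spec}R\to\mathrm{Spec}R_0$ is a homeomorphism, and since any homeomorphism restricts to a homeomorphism between a subset (with the subspace topology) and its image, for part~(1) it is enough to verify the single set equality $\varphi\bigl(\Bbb Z_2\mathrm{Max}R\bigr)=\mathrm{Max}R_0$. Granting this, $\varphi$ restricts to a continuous bijection $\Bbb Z_2\mathrm{Max}R\to\mathrm{Max}R_0$ whose inverse is the restriction of $\varphi^{-1}$ and hence also continuous, i.e.\ a homeomorphism, where $\Bbb Z_2\mathrm{Max}R$ and $\mathrm{Max}R_0$ carry the subspace topologies inherited from $\Bbb Z_2\mathrm{Spec}R$ and $\mathrm{Spec}R_0$; injectivity on $\Bbb Z_2\mathrm{Max}R$ is inherited from the injectivity of $\varphi$ on all of $\Bbb Z_2\mathrm{Spec}R$.

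To establish $\varphi(\Bbb Z_2\mathrm{Max}R)=\mathrm{Max}R_0$ I would argue by double inclusion, reading off Theorem~\ref{Gmax}. For ``$\subseteq$'': any $Q\in\Bbb Z_2\mathrm{Max}R$ is either of the form $p+R_1$ with $p\in\mathrm{Max}R_0\cap V(R_1^2)$ or of the form $p+pR_1$ with $p\in\mathrm{Max}R_0\cap D(R_1^2)$; in both cases $Q$ is graded with degree-zero component $p$, so $\varphi(Q)=Q\cap R_0=p\in\mathrm{Max}R_0$. For ``$\supseteq$'': given $p\in\mathrm{Max}R_0$, either $p\in V(R_1^2)$, in which case $p+R_1\in\Bbb Z_2\mathrm{Max}R$ by Theorem~\ref{Gmax} and $\varphi(p+R_1)=p$, or $p\in D(R_1^2)$, in which case $p+pR_1\in\Bbb Z_2\mathrm{Max}R$ and $\varphi(p+pR_1)=p$. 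Since $\mathrm{Spec}R_0=V(R_1^2)\sqcup D(R_1^2)$, this exhausts $\mathrm{Max}R_0$ and finishes part~(1).

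For part~(2), recall that a $\Bbb Z_2$-graded ring $R$ is $\Bbb Z_2$-graded local precisely when it has a unique $\Bbb Z_2$-graded maximal ideal, i.e.\ $\lvert\Bbb Z_2\mathrm{Max}R\rvert=1$, whereas $R_0$ is local precisely when $\lvert\mathrm{Max}R_0\rvert=1$. The bijection of part~(1) gives $\lvert\Bbb Z_2\mathrm{Max}R\rvert=\lvert\mathrm{Max}R_0\rvert$, whence the equivalence.

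The proof is thus a repackaging of earlier results, and I do not expect a genuine obstacle. The one point deserving care is the topology: one must confirm that the subspace topology on $\Bbb Z_2\mathrm{Max}R$ (as a subset of $\Bbb Z_2\mathrm{Spec}R$) and on $\mathrm{Max}R_0$ (as a subset of $\mathrm{Spec}R_0$) are the intended ones, and that a homeomorphism restricting to a bijection between two such subspaces is automatically a homeomorphism of those subspaces --- a standard fact, but the only step that is not pure bookkeeping. Correctly pairing up the dichotomy $V(R_1^2)$ versus $D(R_1^2)$ on the two sides, via Theorem~\ref{Gmax}, is the other thing to keep straight.
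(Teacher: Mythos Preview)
Your argument is correct, but the paper proceeds more economically by a purely topological observation rather than via Theorem~\ref{Gmax}. The paper notes that in $\mathrm{Spec}R_0$ (resp.\ $\Bbb Z_2\mathrm{Spec}R$) the closed points are exactly the maximal ideals (resp.\ the $\Bbb Z_2$-graded maximal ideals), since $\overline{\{p\}}=V(p)$ (resp.\ $\overline{\{P\}}=V_{\Bbb Z_2}(P)$); as any homeomorphism carries closed points to closed points, part~(1) follows immediately from Theorem~\ref{homeo} alone, and part~(2) is then the ``single closed point'' case. Your route, by contrast, invokes the explicit classification of $\Bbb Z_2\mathrm{Max}R$ in Theorem~\ref{Gmax} to check the set equality $\varphi(\Bbb Z_2\mathrm{Max}R)=\mathrm{Max}R_0$ by hand. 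Both are valid; the paper's version is shorter and does not depend on Theorem~\ref{Gmax} (so the corollary really only needs Theorem~\ref{homeo}), while your approach is more concrete and makes the preimage of each $p\in\mathrm{Max}R_0$ explicit.
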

\begin{proof}
\begin{enumerate}
\item Due to the fact that, $\Bbb Z_2$-graded maximal ideals of $R$ (respectively, maximal ideals of $R_0$) are exactly the closed points of $\Bbb Z_2\mathrm{Spec} R$ (respectively $\mathrm{Spec}R_0$) with respect to the Zariski topology.
\item This is the case of one closed point.
\end{enumerate}
\end{proof}
\section{\bf $\Bbb Z_2$-Graded field}
\begin{definition}
Let $R$ be a $G$-graded ring. 
\begin{enumerate}
\item $R$ is called a $G$-graded integral domain if whenever $r$ and $s$ are homogeneous elements of $R $ such that $rs=0$, then either $r=0$ or $s=0$. 
\item $R$ is called a $G$-graded field if every nonzero homogeneous element is a unit.
\end{enumerate}
\end{definition}
\begin{remark} 
A $G$-graded commutative ring $R$ is  a $G$-graded domain if and only if the zero ideal is a $G$-graded prime ideal of $R$. Whereas,  a $G$-graded commutative ring is a $G$-graded field if and only if the zero ideal is a $G$-graded maximal ideal of $R$. 
\end{remark}
\begin{theorem}
Let $R$ be a $\Bbb Z_2$-graded commutative ring. Then $R$ is a $\Bbb Z_2$-graded integral domain if and only if $R_1=0$ and $R_0$ is an  integral domain or $0 $ is  a prime submodule of $R_1$ such that  $(0:_{R_0}R_1)=0$ and $R_1^3\ne 0$. 
\end{theorem}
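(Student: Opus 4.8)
The plan is to recall that $R$ is a $\Bbb Z_2$-graded integral domain exactly when the zero ideal is a $\Bbb Z_2$-graded prime ideal of $R$, and then to apply Theorem \ref{Gprime} to the graded prime ideal $Q=0$. First I would dispose of the trivial structural observation: $Q=0$ is graded, with $Q\cap R_0=0$ and $Q\cap R_1=0$, so in the notation of Theorem \ref{Gprime} we have $p=0$ and $R'=0$. The dichotomy in that theorem then splits exactly along the question of whether $R_1\subseteq Q$, i.e. whether $R_1=0$.

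In the first case, $R_1=0$: here $Q=0=p+R_1$ with $p=0\in V(R_1^2)$ (vacuously, since $R_1^2=0$), and the condition that $0$ be a $\Bbb Z_2$-graded prime ideal reduces to $R_0/0=R_0$ being an integral domain. So this branch contributes precisely ``$R_1=0$ and $R_0$ an integral domain.'' In the second case, $R_1\ne 0$: then $R_1\not\subseteq Q=0$, so by Theorem \ref{Gprime}(2) we must have $Q=(R':_{R_0}R_1)+R'$ with $R'=0$ a prime submodule of $R_1$, $(0:_{R_0}R_1)=0$ (this is forced since $p=Q\cap R_0=0$ and $p=(R':_{R_0}R_1)$ by the theorem), and $R_1^3\not\subseteq R'=0$, i.e. $R_1^3\ne 0$. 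Conversely, each of the two stated conditions plugs straight back into the respective clause of Theorem \ref{Gprime} to show $0$ is a $\Bbb Z_2$-graded prime ideal, hence $R$ is a $\Bbb Z_2$-graded integral domain. I would also note that when $R_1=0$ the conditions ``$0$ prime submodule of $R_1$'', ``$(0:_{R_0}R_1)=0$'' degenerate, and $R_1^3=0$, which is why the two cases are genuinely disjoint and the statement is a clean ``or.''

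The main subtlety, rather than obstacle, is bookkeeping the equivalence between the internal definition of a $\Bbb Z_2$-graded integral domain (no zero divisors among homogeneous elements) and the statement ``$0$ is a graded prime ideal,'' and then being careful that Theorem \ref{Gprime} is stated for a graded ideal $Q$ that is assumed proper — here $Q=0\ne R$ automatically since $R$ has a unit, so that hypothesis is free. One should also double-check the degenerate reading of clause (2) when $R_1=0$: a prime submodule is required to be \emph{proper}, so $R'=0=R_1$ is not a prime submodule, correctly excluding the $R_1=0$ situation from clause (2) and confirming it lands only in clause (1). With these checks in place the proof is just a translation of Theorem \ref{Gprime} to the case of the zero ideal.
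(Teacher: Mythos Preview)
Your proposal is correct and follows essentially the same approach as the paper: both reduce the statement to ``$0$ is a $\Bbb Z_2$-graded prime ideal'' and then read off the two cases of Theorem~\ref{Gprime} with $Q=0$, obtaining $p=0$, $R'=0$ and the corresponding conditions. Your write-up is in fact more careful than the paper's (you explicitly verify properness, check that $R'=0$ cannot be a prime submodule when $R_1=0$, and note the disjointness of the two clauses), but the underlying argument is identical.
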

\begin{proof}
$(\Rightarrow)$ If $R$ is a $\Bbb Z_2$-graded integral domain, then $0$ is a $\Bbb Z_2$-graded prime ideal of $R$. By Theorem \ref{Gprime}, $0=p+R_1$ where $p\in V(R_1^2)$ or $0=(R':_{R_0}R_1)+R'$ where $R'$ is a prime submodule of $R_1$ with $R_1^3\not\subseteq R'$. If $0=p+R_1$, then $R_0$ and $p=0$ is a prime ideal of $R_0$, that is $R_1=0$ and $R_0$ is an integral domain. If $ 0=(R':_{R_0}R_1)+R'$, then $ R'=0$ and $(R':_{R_0}R_1)=0$, as desired. \\
$(\Leftarrow)$ Immediate. 
\end{proof}
\begin{corollary}
Let $R$ be a strongly $\Bbb Z_2$-graded commutative ring. Then $R$ is a $\Bbb Z_2$-graded integral domain if and only if $R_0$ is an integral domain.
\end{corollary}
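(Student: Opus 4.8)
The plan is to reduce everything to the characterization recorded in the remark just above, namely that a $\Bbb Z_2$-graded commutative ring is a $\Bbb Z_2$-graded integral domain exactly when its zero ideal is a $\Bbb Z_2$-graded prime ideal. Thus the whole statement amounts to deciding when $0 \in \Bbb Z_2\mathrm{Spec}R$, and the structure theorem for the homogeneous spectrum of a strongly graded ring does the work.

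For the implication from left to right I would argue as follows. If $R$ is a $\Bbb Z_2$-graded integral domain, then $0$ is a $\Bbb Z_2$-graded prime ideal of $R$; hence, exactly as in the first paragraph of the $(\Rightarrow)$ part of the proof of Theorem \ref{Gprime}, its degree-zero part $0 \cap R_0 = 0$ is a prime ideal of $R_0$, so $R_0$ is an integral domain. This half uses nothing about strong grading.

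For the converse, assume $R_0$ is an integral domain, so that $p := 0$ is a prime ideal of $R_0$. By Corollary \ref{StrGprime}, the ideal $p + pR_1 = 0 + 0 \cdot R_1 = 0$ lies in $\Bbb Z_2\mathrm{Spec}R$; that is, the zero ideal is a $\Bbb Z_2$-graded prime ideal of $R$, and therefore $R$ is a $\Bbb Z_2$-graded integral domain. Equivalently, one could bypass Corollary \ref{StrGprime} and use the preceding Theorem on $\Bbb Z_2$-graded domains directly: in the strongly graded case $R_1^2 = R_0$ forces $(0 :_{R_0} R_1) = 0$ (from $aR_1 = 0$ one gets $aR_0 = aR_1^2 = 0$) and $R_1^3 = R_0 R_1 = R_1$, and, writing $1 = \sum_i a_i b_i$ with $a_i,b_i \in R_1$, one checks that $0$ is a prime submodule of $R_1$ precisely when $R_0$ has no nonzero zero-divisors.

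I do not anticipate a real obstacle here; the only points that need a moment's attention are the degenerate ring $R = 0$ (where $R_1^2 = R_0 = 0$ gives $R_1 = 0$ and both sides of the equivalence fail) and the reminder that ``the zero ideal is a graded prime ideal'' already incorporates the properness condition $0 \ne R$, which matches the nontriviality convention built into ``integral domain''.
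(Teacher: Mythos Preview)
Your proof is correct and follows essentially the same route as the paper, whose proof is the single line ``By the Corollary \ref{StrGprime}''; you have merely unpacked that citation by observing that $R$ is a $\Bbb Z_2$-graded domain iff $0\in\Bbb Z_2\mathrm{Spec}R$ and then reading off from Corollary~\ref{StrGprime} that this happens iff $0\in\mathrm{Spec}R_0$. The additional alternative via the preceding theorem on graded domains is also fine but is not needed.
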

\begin{proof}
By the Corollary \ref{StrGprime}.
\end{proof}
\begin{theorem}
Let $R$ be a $\Bbb Z_2$-graded commutative ring. Then $R$ is a $\Bbb Z_2$-graded field if and only if $R_0$ is a field and either $R_1=0$ or  $R_1^2\ne 0$.
\end{theorem}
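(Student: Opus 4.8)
The plan is to reduce the statement to a membership question for the zero ideal and then read off the answer from Theorem~\ref{Gmax}. Recall from the remark above that $R$ is a $\Bbb Z_2$-graded field if and only if $(0)$ is a $\Bbb Z_2$-graded maximal ideal of $R$, i.e.\ $(0)\in\Bbb Z_2\mathrm{Max}R$. By Theorem~\ref{Gmax},
$$\Bbb Z_2\mathrm{Max}R=\{\,p+R_1\ /\ p\in\mathrm{Max}R_0\cap V(R_1^2)\,\}\cup\{\,p+pR_1\ /\ p\in\mathrm{Max}R_0\cap D(R_1^2)\,\},$$
so all that remains is to decide exactly when $(0)$ has one of these two shapes.

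I would treat the two families separately. For the first, $0=p+R_1$ inside $R=R_0\oplus R_1$ forces $p=(0)$ and $R_1=0$; then the condition $p\in\mathrm{Max}R_0$ says precisely that $R_0$ is a field, while $p\in V(R_1^2)$ holds automatically since $R_1=0$. Hence $(0)$ belongs to the first family if and only if $R_0$ is a field and $R_1=0$. For the second family, $0=p+pR_1$ forces $p=(0)$ (and then the summand $pR_1$ is automatically zero); again $p\in\mathrm{Max}R_0$ means $R_0$ is a field, and $p\in D(R_1^2)$ means $R_1^2\not\subseteq(0)$, i.e.\ $R_1^2\ne 0$. Hence $(0)$ belongs to the second family if and only if $R_0$ is a field and $R_1^2\ne 0$. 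Putting the two cases together gives $(0)\in\Bbb Z_2\mathrm{Max}R$ if and only if $R_0$ is a field and ($R_1=0$ or $R_1^2\ne 0$), which is the assertion.

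I do not anticipate a real obstacle, since Theorem~\ref{Gmax} does the heavy lifting; the only slightly delicate point is to check that the degenerate instances obtained by setting $p=(0)$ genuinely satisfy the side conditions $p\in V(R_1^2)$ (resp.\ $p\in D(R_1^2)$), and conversely that these side conditions are what force $R_1=0$ (resp.\ $R_1^2\ne 0$). As a cross-check I would keep in mind a direct argument: if $R$ is a $\Bbb Z_2$-graded field, then comparing homogeneous components of an inverse of a nonzero element of $R_0$ shows $R_0$ is a field, and for $0\ne x\in R_1$ with $xy=1$, $y=y_0+y_1$, the degree-zero component yields $xy_1=1\in R_1^2$; conversely, if $R_0$ is a field and $R_1^2\ne 0$ then $R_1^2=R_0$, so $R$ is strongly graded, and $R_1$, being a multiplication module by Proposition~\ref{SGideal} and hence one-dimensional over the field $R_0$, has a basis vector $e$ with $e^2$ a nonzero (so invertible) element of $R_0$, whence every nonzero homogeneous element is a product of units.
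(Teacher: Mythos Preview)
Your proposal is correct and follows essentially the same route as the paper: reduce to the statement that $(0)\in\Bbb Z_2\mathrm{Max}R$, invoke Theorem~\ref{Gmax}, and read off the two cases. The paper's proof is just a terser version of your first two paragraphs (without the direct cross-check you append at the end).
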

\begin{proof}
$R$  is a $\Bbb Z_2$-graded field if and only if $0$ is a $\Bbb Z_2$-graded maximal ideal of $R$. According to Theorem \ref{Gmax}, $0=p+R_1$ with $p$ maximal ideal of $R_0$ containing $R_1^2$, which is equivalent to $R_1=0$ and $0=p$ is a maximal ideal of $R_0$. Or $0=p+pR_1$ with $p$ is a maximal ideal of $R_0$ such that $R_1^2\not\subseteq p$, which is equivalent to $0=p$ is a maximal ideal of $R_0$ and $R_1^2\ne 0$. 
\end{proof}
\begin{corollary}
Let $R$ be a $\Bbb Z_2$-graded commutative ring. Then $R$  is a $\Bbb Z_2$-graded field if and only if $R_0$ is a field and either $R_1=0$ or $R_1=R_0b$ where $b\in R_1$ such that $b^2\ne 0$.
\end{corollary}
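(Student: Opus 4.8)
The plan is to deduce this directly from the preceding Theorem, which asserts that $R$ is a $\Bbb Z_2$-graded field if and only if $R_0$ is a field and either $R_1=0$ or $R_1^2\ne 0$. Thus it suffices to show that, under the standing assumption that $R_0$ is a field, the condition ``$R_1^2\ne 0$'' is equivalent to ``$R_1=R_0b$ for some $b\in R_1$ with $b^2\ne 0$''. The case $R_1=0$ matches up trivially on both sides, so I would treat the case $R_1\ne 0$ and prove this equivalence.

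For the forward implication, suppose $R_0$ is a field and $R_1^2\ne 0$. Then there are $x,y\in R_1$ with $xy\ne 0$; since $xy\in R_0$ and $R_0$ is a field, $xy$ is a unit of $R_0$. Now for an arbitrary $z\in R_1$ we have $yz\in R_0$, hence $z=(xy)^{-1}(xy)z=\big((xy)^{-1}(yz)\big)x\in R_0x$, so $R_1=R_0x$. Consequently $R_1^2=R_0x^2$; if $x^2=0$ this would force $R_1^2=0$, a contradiction, so $x^2\ne 0$ and we may take $b:=x$. For the reverse implication, if $R_1=R_0b$ with $b^2\ne 0$, then $R_1^2=R_0b^2$ contains $b^2\ne 0$, so $R_1^2\ne 0$.

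Combining these two observations with the Theorem gives exactly the stated characterization. I do not anticipate a genuine obstacle here; the only point requiring a little care is the bookkeeping between the $R_1=0$ and $R_1\ne 0$ alternatives, and the elementary fact that a nonzero element of a field is invertible is what makes the ``principal generator'' conclusion work. Everything else is a routine substitution.
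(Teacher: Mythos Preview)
Your proof is correct and follows essentially the same route as the paper: both invoke the preceding Theorem, pick $x,y\in R_1$ with $xy\ne 0$, use that $xy$ is a unit in the field $R_0$ to write any $z\in R_1$ as an $R_0$-multiple of $x$, and then observe $x^2\ne 0$ because $R_1^2=R_0x^2$. The only cosmetic difference is that the paper handles the converse by noting directly that every nonzero homogeneous element is a unit, whereas you route it back through the Theorem via $R_1^2\ne 0$; both are immediate.
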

\begin{proof}
We may assume that $R_1\ne 0$. If $R$ is a $\Bbb Z_2$-graded field, then  by the previous Theorem $R_0$ is a field and $R_1^2\ne 0$. So $bb'\ne 0$ for some  $b,b'\in R_1$ since $R_1^2$ is generated by all elements $bb'$ where $b,b'\in R_1$. As $bb'\ne 0$ and $R_0$ is a filed, $bb'$ is a unit in $R_0$. There exists $\alpha\in R_0$ such that $bb'\alpha=1$. It follows that $r=\alpha (rb')b$ for all $r\in R_1$. Therefore $R_1=R_0b$. Clearly $b^2\ne 0$ since $R_1^2\ne 0$. The converse is immediate, it is easy to see that every homogeneous elements in a unit.  
\end{proof}
For a commutative ring $S$ we look at $S[X]$ as a $\Bbb Z_2$-graded commutative ring see Example \ref{NGradedPolynome}. We see that $(S[X])_0=\{P(X^2)\ / \ P\in S[X]\}$ and $(S[X])_1=\{XP(X^2)\ / \ P\in S[X]\}$. For $\alpha\in S$, $(X^2-\alpha)$ is a $\Bbb Z_2$-graded ideal of $S[X]$. Therefor $\frac{S[X]}{(X^2-\alpha)}$ is a $\Bbb Z_2$-graded commutative ring and clearly $\left(\frac{S[X]}{(X^2-\alpha)}\right)_0=S$ and $\left(\frac{S[X]}{(X^2-\alpha)}\right)_1=S\overline{X}$. Note that if $S$ is a field and $\alpha\ne 0$, then $\frac{S[X]}{(X^2-\alpha)}$ is a graded field.
\begin{corollary}
Let $R$ be a $\Bbb Z_2$-graded commutative ring. If $R$ is a $\Bbb Z_2$-graded field, then $R_0$ is a field and either $R_1=0$ or  $R$  is isomorphic to  $\frac{R_0[X]}{(X^2-\alpha)}$ as $\Bbb Z_2$-graded rings, where $\alpha$ is a nonzero element of $R_0$. 
\end{corollary}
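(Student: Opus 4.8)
The plan is to bootstrap from the preceding Corollary, which already says that a $\Bbb Z_2$-graded field $R$ with $R_1\ne 0$ has $R_0$ a field and $R_1=R_0b$ for some homogeneous $b\in R_1$ with $b^2\ne 0$; the remaining work is to turn this data into an explicit graded isomorphism. So I would first dispose of the case $R_1=0$, where the statement reduces to ``$R_0$ is a field'', which is contained in the earlier Theorem. Assuming $R_1\ne 0$, I fix $b$ as above and set $\alpha:=b^2$, noting that $\alpha\ne 0$ and that $\alpha$ is in fact a \emph{unit} of $R_0$ since $R_0$ is a field.

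Next I would construct the comparison map. Equip $R_0[X]$ with its standard $\Bbb Z_2$-grading (even powers of $X$ in degree $0$, odd powers in degree $1$) as in Example~\ref{NGradedPolynome}, and let $\phi:R_0[X]\to R$ be the unique $R_0$-algebra homomorphism with $\phi(X)=b$. Because $b$ is homogeneous of degree $1$, $\phi$ sends homogeneous elements to homogeneous elements of the same degree, so it is a homomorphism of $\Bbb Z_2$-graded rings; its image is $R_0+R_0b=R_0\oplus R_1=R$, so $\phi$ is surjective. Since $\phi(X^2-\alpha)=b^2-\alpha=0$ and $(X^2-\alpha)$ is a $\Bbb Z_2$-graded ideal of $R_0[X]$, $\phi$ descends to a surjective homomorphism of $\Bbb Z_2$-graded rings $\overline{\phi}:R_0[X]/(X^2-\alpha)\to R$, the quotient carrying the induced grading (this quotient is exactly the graded ring discussed just before the Corollary).

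Finally I would verify injectivity, which is the only place the hypothesis $b^2\ne 0$ is really used. Writing $S:=R_0[X]/(X^2-\alpha)$, the fact that $X^2-\alpha$ is monic of degree $2$ gives $S_0=R_0$ and $S_1=R_0\overline{X}$, so it suffices to check that $\overline{\phi}$ is injective on each graded component. On $S_0$ it is the identity of $R_0$; on $S_1$, if $\overline{\phi}(a\overline{X})=ab=0$ with $a\in R_0$, then multiplying by $b$ gives $a\alpha=0$, whence $a=0$ since $\alpha$ is a unit. Thus $\overline{\phi}$ is a graded bijection, hence a graded isomorphism $R\cong R_0[X]/(X^2-\alpha)$ with $\alpha\ne 0$, as claimed. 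I do not expect any serious obstacle here; the only point needing a moment's care is confirming that the grading on the quotient is the one compatible with $R$, and this is automatic once $\overline{\phi}$ is known to be graded and bijective.
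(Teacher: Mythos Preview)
Your proposal is correct and follows essentially the same route as the paper: both invoke the preceding Corollary to obtain $R_0$ a field and $R_1=R_0b$ with $b^2\ne 0$, then define the evaluation map $R_0[X]\to R$, $X\mapsto b$, check it is graded and surjective, and verify that its kernel is $(X^2-\alpha)$ with $\alpha=b^2$. The only cosmetic difference is in the last step: the paper computes the kernel via Euclidean division by the monic polynomial $X^2-\alpha$, while you pass to the quotient first and check injectivity on each graded piece using that $\alpha$ is a unit; both arguments are equally short and valid.
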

\begin{proof}
We may assume that $R_1\ne 0$. By the previous Corollary $R_0$ is a field and $R_1=R_0b$ where $b\in R_1$ such that $b^2=0$. Let $f:R_9[X]\to R$ the map defined by $ f(P)=P(b)$. Clearly $f$ is a rings morphism. We have $f(P(X^2))=P(b^2)\in R_0$ since $b^2\in R_0$, also $f(XP(X^2))=bP(b^2)\in R_1$. It follows that $f$ is a $\Bbb Z_2$-graded morphism. For $r=r_0+r_1\in R$. We see that $r_1=a b$ for some $a\in R_0$, so that $f(r_0+aX)=r_0+ab=r$. Now, let $\alpha=b^2\in R_0$, then $(X^2-\alpha)\subseteq \ker f$. Let $P\in \ker f$, by the euclidean division, $P=(X^2-\alpha)Q+dX+c$ where $Q\in R_0[X]$ and $d,c\in R_0$. But $P(b)=0$ implies that $db+c=0$, so that $d=0$ and $c=0$ ( if $d\ne 0$, then $d$ is a unit, so $b=d^{-1}c\in R_0\cap R_1$  which is not possible). It follows that $\ker f=(X^2-\alpha)$.  
\end{proof} 
\section{\bf $\Bbb Z_2$-Graded integral domain and integral domain}
For $x=a+r\in R$ where $(a,r)\in R_0\times R_1$, define $N(x):=a^2-r^2$. Then, $N(xy)=N(x)N(y)$ and $N(x)\in R_0$ for all $x,y\in R$. Also $N(0)=0$ and $N(1)=1$. Next we establish the relation between some properties of $N$ and the integrity of $R$. To do, let $\mathcal{N}:=\{x\in R\ / N(x)=0\}$.
\begin{theorem}
Let $R$ be a $\Bbb Z_2$-graded ring. The following statements are equivalent.
\begin{enumerate}
\item $R$ is an integral domain.
\item $R$ is a $\Bbb Z_2$-graded integral domain and $\mathcal{N}=0$.
\end{enumerate}
\end{theorem}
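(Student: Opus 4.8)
The plan is to build everything on the norm map $N$ already introduced: its multiplicativity $N(xy)=N(x)N(y)$, the elementary factorization identity $(a+r)(a-r)=a^2-r^2=N(a+r)$ valid because $R$ is commutative, and the observation that hypothesis (2) forces $R_0$ itself to be an integral domain. Once these are in place both implications are short.

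For $(1)\Rightarrow(2)$, if $R$ is an integral domain it is in particular a $\Bbb Z_2$-graded integral domain, since the defining condition of the latter only constrains homogeneous elements (equivalently, the zero ideal, being prime and graded, is a $\Bbb Z_2$-graded prime ideal). To check $\mathcal{N}=0$, take $x=a+r$ with $a\in R_0$, $r\in R_1$ and $N(x)=0$. Then $(a+r)(a-r)=a^2-r^2=N(x)=0$, so $a+r=0$ or $a-r=0$; in the second case $a=r\in R_0\cap R_1=0$ (the sum $R=R_0\oplus R_1$ is direct), hence again $x=0$. Thus $\mathcal{N}=\{0\}$.

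For $(2)\Rightarrow(1)$, the first step is to note that $R_0$ is an integral domain: by the Remark on $\Bbb Z_2$-graded domains the zero ideal is a $\Bbb Z_2$-graded prime ideal of $R$, and for any $\Bbb Z_2$-graded prime ideal $P$ the contraction $P\cap R_0$ is a prime ideal of $R_0$; applying this to $P=0$ shows $0$ is prime in $R_0$, while $R_0\neq 0$ because $R\neq 0$. Now suppose $x,y\in R$ with $xy=0$. Then $N(x)N(y)=N(xy)=N(0)=0$ with $N(x),N(y)\in R_0$; since $R_0$ is a domain, $N(x)=0$ or $N(y)=0$, i.e.\ $x\in\mathcal{N}$ or $y\in\mathcal{N}$. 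Because $\mathcal{N}=0$ by assumption, we get $x=0$ or $y=0$, so $R$ is an integral domain.

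I do not expect any real obstacle here; the whole argument is essentially a bookkeeping exercise once the norm map is available. The only points requiring a moment's care are the use of $R_0\cap R_1=0$ and $R\neq 0$ in the first implication, and the reduction to the known fact that contractions of $\Bbb Z_2$-graded primes to $R_0$ are prime in the second — all of which are immediate from material already established.
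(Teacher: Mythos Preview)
Your proof is correct and follows essentially the same approach as the paper. The only cosmetic difference is in $(2)\Rightarrow(1)$: the paper observes directly that $N(x),N(y)$ are homogeneous elements whose product is zero in the graded domain $R$, whereas you first restate this as ``$R_0$ is a domain'' via the contraction of the graded prime $0$ --- logically the same step, just phrased differently.
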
 
\begin{proof}
$(1)\Rightarrow (2)$: Clearly, if $R$ is an integral domain then $R$ is a $\Bbb Z_2$-graded integral domain. Let $x=a+r\in \mathcal{N}$, then $a^2-r^2=0$, so that $(a+r)(a-r)=0$, thus $a+r=0$ or $a-r=0$. Hence $x=0$. It follows that $\mathcal{N}=0$.\\
$(2)\Rightarrow (1)$: Let $x,y\in R$ with $xy=0$. Then $N(x)N(y)=0$. Note that $N(x)$ and $N(y)$ are homogeneous, so that $N(x)=0$ or $N(y)=0$ that is $x\in \mathcal{N}$ or $y\in \mathcal{N}$. Thus $x=0$ or $y=0$. 
\end{proof}


\end{document}